
\documentclass{amsart}








\usepackage[shortlabels]{enumitem}
\usepackage{xcolor}
\usepackage{stackengine}
\usepackage{dsfont}
\usepackage{enumitem}
\usepackage{physics}
\usepackage{amssymb}
\usepackage[T1]{fontenc}
\usepackage{amsmath}
\newtheorem{thm}{Theorem}[section]

\newtheorem{lem}[thm]{Lemma}

\newtheorem{Defi}[thm]{Definition}
\newtheorem{rk}[thm]{Remark}
\usepackage{mathtools}
\usepackage{fancyhdr}
\DeclarePairedDelimiter{\Vector}{\lparen}{\rparen}




\theoremstyle{definition}




\theoremstyle{remark}


\numberwithin{equation}{section}


\newcommand\shorttitle[1]{\renewcommand\@shorttitle{#1}}




\usepackage[babel,german=quotes]{csquotes}
\usepackage[backend=bibtex8,maxbibnames=99]{biblatex}
\bibliography{Texfile}

\begin{document}


\title[Wild Solutions of the $3D$ axisymmetric Euler equations]{Wild Solutions of the $3D$ axisymmetric Euler equations}


\author{Patrick Brkic}
\address{Institut für Angewandte Analysis, Universit\"at Ulm}
\email{patrick.brkic@uni-ulm.de}

\author{Emil Wiedemann}
\address{Department of Mathematics, Friedrich-Alexander-Universit\"at Erlangen-N\"urnberg}
\email{emil.wiedemann@fau.de}

\begin{abstract}
We consider the Cauchy problem for the $3D$ incompressible axisymmetric swirl-free Euler equations. The convex integration method developed by De Lellis and Sz\'{e}kelyhidi rules out the possibility that the Euler equations admit unique admissible weak solutions. It had remained conceivable, though, that axisymmetry of the solution might serve as a selection criterion. Using a surprising link to the $2D$ isentropic compressible Euler equations, we will show that this is not the case: There exists initial data for which there are infinetely many admissible swirl-free axisymmetric weak solutions of the $3D$ incompressible Euler equations. Moreover, somewhat conversely, we show that there exists an axisymmetric swirl-free initial velocity for which the axisymmetry breaks down instantaneously.
\end{abstract}

\maketitle

\section{Introduction}
The Euler equations constitute the fundamental model for ideal fluid flows, yet 
their anaylsis remains difficult and largely unresolved as soon as effects of turbulence come into play. 
A very prominent open problem is whether solutions with smooth initial data admit a global smooth solution. This problem is open for both the $3D$ incompressible Euler equations and Navier-Stokes equations \cite{Feffermann}, although remarkable progress has recently been made toward blow-up for the Euler equations~\cite{Elgindi, Hou1, Hou2}. Other problems, on which we focus here, are the existence and uniqueness of weak solutions to the Euler equations for given initial data. It is known that there are (many) initial conditions that give rise to non-unique solutions~\cite{Scheffer1993}, whereas the existence question is still open unless one surrenders energy admissibility~\cite{Wiedemann2011}.  

In contrast to the three-dimensional framework, the theory of the Euler and Navier-Stokes equations in two dimensions is much better understood. For this reason, $3D$ flows with two-dimensional character are important for the study of the three-dimensional setting. Symmetric flows are of particular interest (not least in the afore-mentioned studies on blow-up), and two symmetry classes are particularly prominent in the field of incompressible fluid dynamics: These are the classes of axisymmetric and of  helical flows, for which the respective symmetry binds one degree of freedom to effect an essentially $2D$ dynamics.

In this paper we deal with a class of symmetric flows modelled by the $3D$ incompressible axisymmetric swirl-free Euler equations. The model is obtained by a change to cylindrical coordinates of the Cauchy problem for the $3D$ incompressible Euler equations 
\begin{align}\label{incompressible 3D Euler equations}
\begin{cases}
&\partial_t v + \operatorname{div}(v\otimes v)+ \nabla \pi =0 \\
& \operatorname{div}(u)=0\\
& v(\cdot,0)=v^0,
\end{cases}
\end{align}
where $v$ is the velocity, $\pi$ is the pressure and the initial velocity satisfies $\operatorname{div}(v_0)=0$. Imposing that, in cylindrical coordinates, the velocity $$v=(v_r(r,\theta,z,t),v_\theta(r,\theta,z,t),v_z(r,\theta,z,t))$$
and the pressure $\pi$ only depend on the radial and the vertical variables $r$ and $z$ and the swirl component $v_{\theta}$ vanishes, we end up with the Cauchy problem for the axisymmetric swirl-free Euler equations in $(0,\infty)\times \mathbb{R}\times [0,T]$:
\begin{align}\label{Cauchy problem for the axisymmetric Euler equations}
\begin{cases}
&\partial_tv_r+\begin{pmatrix}
v_r\\v_z
\end{pmatrix}\cdot \begin{pmatrix}
\partial_r\\ \partial_z
\end{pmatrix}v_r+\partial_r \pi=0\\
&\partial_t v_z+\begin{pmatrix}
v_r\\v_z
\end{pmatrix}\cdot \begin{pmatrix}
\partial_r\\ \partial_z
\end{pmatrix}v_z+\partial_z \pi  =0\\
&\partial_r(rv_r)+\partial_z(rv_z) =0\\
&v(\cdot,0)=v^0.
\end{cases}
\end{align} 
Ukhovskii and Yudovich \cite{UKHOVSKII196852} were the first to give a global well-posedness result for the $3D$ incompressible Euler equations in the class of axisymmetric swirl-free weak solutions in $L^2_tH^1_x$ by imposing $v_0\in L^2\cap L^{\infty}$, $\omega_0\in L^2\cap L^{\infty}$, and $\xi_0\in L^2\cap L^{\infty}$, where $\omega_0$ is the initial vorticity and $\xi_0=\frac{\omega_0}{r}$ is the initial relative vorticity. Since then various existence and uniqueness results have been established, see e.g.~\cite{Abidi,Danchin_2007,Majda,Raymond,Chae,Chae2}. More recently, weak solutions of the axisymmetric swirl-free Euler equations were studied by means of the viscosity limit~\cite{JiuQuansen,nobili2019renormalization,WiedemannBrkic}. In contrast to~\eqref{incompressible 3D Euler equations}, it is well-known that in the axisymmetric swirl-free setting~\eqref{Cauchy problem for the axisymmetric Euler equations}, solutions are globally smooth if the initial velocity is smooth~\cite{Raymond,Majda}. For less regular initial data and related to the blow-up behavior of the $3D$ incompressible Euler equations, let us recall that in the recent work~\cite{Elgindi}, axisymmetric swirl-free finite time blow-up solutions for the vorticity formulation of~\eqref{incompressible 3D Euler equations} were constructed with infinite energy, and shortly later with finite energy~\cite{DrivasElgindi}.
Further, in~\cite{Elgindi} and~\cite{DrivasElgindi} it was conjectured that finite time blow-up seems to require velocity fields in $C^{1,\alpha}$ for $\alpha<\frac{1}{3}$. In~\cite{nev} the blow-up regime was investigated by means of different methods and the authors gave numerical evidence for $\frac{1}{3}$ as the critical threshold for finite time blow-up.

In this work we shall study very weak solutions of~\eqref{Cauchy problem for the axisymmetric Euler equations}. Since the groundbreaking works of De Lellis and Sz\'ekelyhidi \cite{convexintegration,convexintegration2}, showing non-uniqueness of weak solutions to the incompressible Euler equations for certain initial data, the method of convex integration has become a well-established tool to construct admissible weak solutions for the Euler equations. (By weak solution, we mean a distributional solution in the class $L^{\infty}_tL^2_x$, and we say it is admissible if it satisfies some kind of energy inequality.) As a consequence of their work for the incompressible Euler equations, they even concluded an analogous non-uniqueness result for admissible (entropy-)solutions of the isentropic compressible Euler equations
\begin{align}\label{compressible Euler equations}
\begin{cases}
&\partial_t \rho + \operatorname{div}(\rho v)=0\\
&\partial_t (\rho v) + \operatorname{div}(\rho v\otimes v)+ \nabla p(\rho)=0\\
&\rho(\cdot,0)=\rho^0\\
&v(\cdot,0)=v^0,
\end{cases}
\end{align}
where $v$ represents the velocity and $\rho$ the density of the gas, and $p$ is a function of the density $\rho$ and models the pressure.

Based on the ideas from \cite{convexintegration,convexintegration2}, Elisabetta Chiodaroli~\cite{Chiodaroli} designed a semi-stationary (i.e., the density is time-independent) convex integration scheme for~\eqref{compressible Euler equations} in terms of the momentum $m=\rho v$ in the space periodic setting:
\begin{align}\label{compressible Euler equations in terms of the momentum}
\begin{cases}
&\partial_t \rho + \operatorname{div}(m)=0\\
&\partial_t m + \operatorname{div}(\frac{m\otimes m}{\rho})+ \nabla p(\rho)=0\\
&\rho(\cdot,0)=\rho^0\\
&m(\cdot,0)=m^0.
\end{cases}
\end{align}
On bounded domains and the whole space, Akramov and Wiedemann~\cite{AkramovWiedemann} constructed admissible weak solutions with compact support.

Compared to the weak solutions studied in \cite{UKHOVSKII196852}, weak solutions constructed by convex integration are very weak. In the class of axisymmetric swirl-free velocity fields, the Euler equations~\eqref{incompressible 3D Euler equations} have not been studied by means of convex integration so far to our knowledge. Indeed, there are axisymmetric data giving rise to convex integration solutions~\cite{Scheffer1993,BardosSzWiedemann,Mengual}, but these are expected to break the symmetry instantaneously. 

 In this regard, we resolve two open problems for~\eqref{Cauchy problem for the axisymmetric Euler equations}:
\begin{itemize}
\item {\em Non-uniqueness under preservation of symmetry}: Does there exist axisymmetric swirl-free initial data $v^0\in L^2_{loc}(\mathbb{R}^3)$ for which there exist infinitely many admissible weak solutions $v$ of~\eqref{incompressible 3D Euler equations} preserving axisymmetry and the swirl-free property?
\item {\em Symmetry breaking}: Does there exist axisymmetric and swirl-free initial data $v^0\in L^2_{loc}(\mathbb{R}^3)$  and an admissible weak solution $v$ of~\eqref{incompressible 3D Euler equations} with $v(\cdot,0)=v^0$ for which $v(\cdot,t)$ breaks the symmetry for $t>0$?
\end{itemize}
The main goal of this work is to investigate these problems.
More precisely, we provide affirmative answers to the latter questions, at least for possibly small times: 
\begin{thm}\label{non-uniqueness under symmetry preservation introduction}
There exists $T>0$ and $v^0\in L^2_{loc}(\mathbb{R}^3;\mathbb{R}^3)$ axisymmetric and swirl-free for which there exist infinitely many weak solutions $v\in L^{\infty}(0,T;L^2_{loc}(\mathbb{R}^3;\mathbb{R}^3))$ of~\eqref{incompressible 3D Euler equations} which are axisymmetric and swirl-free almost everywhere.
\end{thm}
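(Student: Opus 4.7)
My plan is to exploit a surprising equivalence — apparently the link advertised in the abstract — between axisymmetric swirl-free solutions of \eqref{Cauchy problem for the axisymmetric Euler equations} and weak solutions of the 2D isentropic compressible system \eqref{compressible Euler equations in terms of the momentum} with the \emph{prescribed stationary density} $\rho(r,z)=r$. On the half-plane $\{(r,z):r>0\}$, putting $\rho(r,z)=r$ and $m(r,z,t)=(rv_r,rv_z)(r,z,t)$, the divergence-free constraint in \eqref{Cauchy problem for the axisymmetric Euler equations} is exactly $\partial_t\rho+\operatorname{div} m=0$ (because $\partial_t\rho=0$); multiplying the velocity equations by $r$ and reorganising the quadratic terms via the identity $r(v_r\partial_r v_r+v_z\partial_z v_r)=\partial_r(m_r^2/\rho)+\partial_z(m_r m_z/\rho)$ (which follows by direct expansion modulo the divergence-free relation), and similarly for the $v_z$-equation, transforms them into $\partial_t m+\operatorname{div}(\tfrac{m\otimes m}{\rho})+\rho\nabla\pi=0$. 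If one further postulates $\pi=\pi(r)$ and chooses a polytropic pressure law $p$ with $p'(r)=r\pi'(r)$, then $\rho\nabla\pi=\nabla p(\rho)$ and the system becomes precisely \eqref{compressible Euler equations in terms of the momentum}.

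The second step is to run a convex-integration scheme for \eqref{compressible Euler equations in terms of the momentum} with this frozen density $\rho(r,z)=r$ and pressure law $p$, producing infinitely many $L^\infty$-momenta $m$ supported inside a bounded region $U\Subset\{r>0\}$ on a short time window $[0,T]$, all sharing a common initial datum $m^0$. This is a variant of the semi-stationary schemes of Chiodaroli~\cite{Chiodaroli} and Akramov--Wiedemann~\cite{AkramovWiedemann}; the $r$-weight enters only through the pointwise quadratic constraint on $\tfrac{m\otimes m}{\rho}$, whose $\Lambda$-convex hull can be analysed pointwise once $U$ is fixed away from the axis and bounded. Non-uniqueness is then produced via the usual Baire-category argument once an adapted strict subsolution on $U\times(0,T)$ with compactly supported perturbation is exhibited.

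Third, each such $m$ lifts to the axisymmetric swirl-free vector field $\tilde v(x)=v_r(r,z,t)e_r+v_z(r,z,t)e_z$ with $v=m/r$. A test-function computation with axisymmetric divergence-free test fields on $\mathbb{R}^3$, combined with $\int_{\mathbb{R}^3}f\,dx=2\pi\int_0^\infty\int_\mathbb{R}f\,r\,dr\,dz$ for axisymmetric $f$, shows that the 3D weak formulation of \eqref{incompressible 3D Euler equations} with pressure $\pi(r)$ is equivalent to the 2D compressible distributional identities obtained in the second step. Each $m$ therefore yields a distinct weak solution of \eqref{incompressible 3D Euler equations} with the common initial velocity $\tilde v^0$ corresponding to $m^0/r$, manifestly axisymmetric and swirl-free since no $\theta$-dependence or $e_\theta$-component is ever introduced. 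Local $L^2$-integrability of $\tilde v$ on compact $K\subset\mathbb{R}^3$ follows from $\|\tilde v\|_{L^2(K)}^2\lesssim\|m\|_{L^\infty}^2\int_K r^{-1}\,dx<\infty$, since the $r^{-1}$ singularity is integrable against the volume element $r\,dr\,dz\,d\theta$.

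The main obstacle lies in the second step: standard compressible convex integration is formulated for (piecewise) constant or periodic densities with constant state at infinity, whereas here the weight $\rho(r,z)=r$ degenerates at the axis and grows at infinity. Localising to a region $U$ bounded away from $r=0$ circumvents the singular behaviour, but one must still adapt the Tartar framework to the $r$-dependent quadratic $\tfrac{m\otimes m}{r}$, construct an explicit strict subsolution on $U\times(0,T)$, and confirm that the induced initial velocity $v^0=m^0/r$ is non-trivial, axisymmetric, and in $L^2_{loc}(\mathbb{R}^3)$. This is where the bulk of the technical work should reside.
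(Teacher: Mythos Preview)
Your proposal is correct and follows essentially the same strategy as the paper: exploit the equivalence between \eqref{Cauchy problem for the axisymmetric Euler equations} and the 2D isentropic compressible system \eqref{compressible Euler equations in terms of the momentum} with prescribed stationary density $\rho=r$ and pressure $p(\rho)=\rho^\gamma$, run a semi-stationary convex integration \`a la Chiodaroli/Akramov--Wiedemann on a region bounded away from the axis, and lift back to axisymmetric swirl-free velocities on $\mathbb{R}^3$.

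The only noteworthy deviations are technical rather than conceptual. First, the paper works on the strip $(\delta,R)\times\mathbb{T}$ with periodicity in $z$ and slip boundary on $\{r=\delta\}\cup\{r=R\}$, rather than on a compactly supported region $U\Subset\{r>0\}$ as you suggest; this choice is dictated by the explicit subsolution the paper exhibits, namely $\tilde m(r,z,t)=(0,\tilde\chi(t)r)$, which is $z$-independent and hence not compactly supported in $z$. Your compact-support variant would require a different subsolution ansatz (closer in spirit to \cite{AkramovWiedemann}), which is feasible but not what the paper does. Second, to obtain infinitely many solutions sharing a \emph{common} initial momentum $m^0$, the paper performs a preliminary convex integration step (Lemma~\ref{Subsolution lemma}) to upgrade the smooth subsolution to one satisfying $|m_0(\cdot,0)|^2=\rho_0\chi(0)$ at $t=0$; you implicitly assume this but do not flag it. Finally, your $L^2_{loc}$ estimate should read $\int_K|\tilde v|^2\,dx\le\|m\|_{L^\infty}^2\int_K r^{-2}\,dx$, which is harmless only because $m$ vanishes near the axis; the $r^{-1}$ you wrote is not quite right, though the conclusion is unaffected since $\operatorname{supp}(m)\subset\{r\ge\delta\}$.
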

\begin{thm}\label{symmetry breaking introduction}
There exists $T>0$ and $v^0\in L^2_{loc}(\mathbb{R}^3;\mathbb{R}^3)$ axisymmetric and swirl-free and admissible weak solutions $v\in L^{\infty}(0,T;L^2_{loc}(\mathbb{R}^3;\mathbb{R}^3))$ of~\eqref{incompressible 3D Euler equations} for which $v(\cdot,t)$ is not axisymmetric almost everywhere for every $t\in(0,T)$. 
\end{thm}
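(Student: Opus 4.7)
The strategy is to reuse the axisymmetric swirl-free initial datum $v^0$ and its axisymmetric subsolution $(\bar v,\bar R)$ constructed in the proof of Theorem~\ref{non-uniqueness under symmetry preservation introduction}, but to feed them into the three-dimensional convex integration scheme of De~Lellis--Sz\'ekelyhidi \emph{without} imposing axisymmetry on the perturbations. The defining conditions of an Euler subsolution are insensitive to any symmetry of $\bar v$, so the pair $(\bar v,\bar R)$ is also a perfectly valid $3D$ subsolution of the incompressible Euler equations, which immediately grants access to the much larger family of non-axisymmetric plane-wave perturbations.

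First I would recall from the proof of Theorem~\ref{non-uniqueness under symmetry preservation introduction} an axisymmetric swirl-free pair $(\bar v,\bar R)$ on $(0,T)\times\mathbb{R}^3$ with $\bar v(\cdot,0)=v^0$, whose Reynolds stress takes values in the strict interior of the relaxation set governing the incompressible Euler convex integration. I would then apply the $3D$ hydrodynamic convex integration of~\cite{convexintegration,convexintegration2} to $(\bar v,\bar R)$, but employing general plane-wave building blocks with frequency vectors not parallel to the symmetry axis $e_z$. Each such building block breaks the rotational invariance about $e_z$, so the resulting admissible weak solution $v\in L^\infty(0,T;L^2_{loc}(\mathbb{R}^3;\mathbb{R}^3))$ deviates from any axisymmetric field on a set of positive measure at generic positive times.

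The main obstacle is to promote non-axisymmetry from ``a generic time'' to ``\emph{every} $t\in(0,T)$'' as the statement requires. My plan is a diagonal Baire category argument: for a countable dense family of times $t_k\in(0,T)$ and a dense family of axisymmetric test fields $\varphi_j$, I would show that the subset of subsolutions whose associated weak solution agrees with $\varphi_j$ on a fixed large ball at time $t_k$ is nowhere dense in the complete metric space underlying the convex integration scheme. A residual choice then produces a single admissible weak solution $v$ whose slice $v(\cdot,t)$ fails to be axisymmetric on a positive-measure set for every $t\in(0,T)$. The remaining, more routine, issues --- the energy inequality, the coordinate singularity at $r=0$ (which is why the initial datum is only assumed to lie in $L^2_{loc}$ rather than $L^2$), and uniform energy control along the iteration --- all follow from the energy-controlled variants of the De~Lellis--Sz\'ekelyhidi machinery as used in~\cite{AkramovWiedemann,Mengual}.
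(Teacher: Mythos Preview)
Your Baire category step has a genuine gap. The condition ``$v(\cdot,t)$ is axisymmetric'' is a linear, weakly closed constraint in $L^2$, not a countable union of closed sets that you can knock out one by one. Showing that $v(\cdot,t_k)\neq\varphi_j$ for countably many axisymmetric test fields $\varphi_j$ says nothing about whether $v(\cdot,t_k)$ itself is axisymmetric, since the axisymmetric subspace is uncountable and closed. Even if you could arrange non-axisymmetry at a dense set of times $\{t_k\}$, this does not propagate to \emph{every} $t\in(0,T)$: non-axisymmetry is an open condition, and the solutions produced by convex integration are only weakly continuous in time, so a weak limit of non-axisymmetric slices may well be axisymmetric. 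More fundamentally, the fact that your plane-wave building blocks are not axisymmetric does not by itself prevent the final Baire-generic solution from being axisymmetric; the scheme gives you no direct handle on the symmetry of the limit.

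The paper avoids all of this with a much cleaner mechanism: it keeps the subsolution $(\bar v,\bar U,\bar q)$ axisymmetric but prescribes a \emph{non-axisymmetric energy profile} $\bar e(r,\theta,t)$, obtained by inserting a factor $(1+\sin^2\theta)$ into the otherwise radial expression. The convex integration then produces weak solutions with $\tfrac12|v|^2=\bar e$ almost everywhere, so the scalar $|v(\cdot,t)|^2$ genuinely depends on $\theta$ for every $t\in(0,T)$, and hence $v(\cdot,t)$ cannot be axisymmetric at any positive time. This gives the ``every $t$'' conclusion for free, with no Baire gymnastics. Incidentally, the paper does not reuse the datum from Theorem~\ref{non-uniqueness under symmetry preservation introduction}; it builds a separate explicit axisymmetric swirl-free datum of vortex-sheet type in the vertical component and reduces the subsolution construction to a Burgers rarefaction, in the spirit of~\cite{SZEKELYHIDI20111063,BardosSzWiedemann}.
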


It will be clear from the construction that both the data and the solutions obtained in both theorems can be chosen periodic in the vertical direction and can then be viewed as elements of $L^{\infty}(0,T;L^2(\mathbb{R}^2\times\mathbb{T};\mathbb{R}^3))$, where $\mathbb{T}=\mathbb{R}/\mathbb{Z}$ denotes the one-dimensional flat torus; we are therefore not `cheating' by working in $L^2_{loc}$, letting infinite energy intrude from spatial infinity or the like.  

Theorem \ref{non-uniqueness under symmetry preservation introduction} shows that we can construct many weak solutions of~\eqref{incompressible 3D Euler equations} which preserve axisymmetry and the no-swirl condition. It is the first result where convex integration is applied to the study of axisymmetric Euler equations. It can be compared to numerical simulations and experiments from axisymmetric turbulence (see~\cite{Ertunc2007} and references therein). 


We will prove Theorem~\ref{non-uniqueness under symmetry preservation introduction} in Section 2. Let us outline our strategy and highlight the key novel key ingredients for this proof. In a first step we present an innovative link between~\eqref{Cauchy problem for the axisymmetric Euler equations} and~\eqref{compressible Euler equations}. In fact, for a specific choice of the density $\rho$ which only depends on the radial variable $r$, we will show that the $2D$ compressible Euler equations~\eqref{compressible Euler equations} in the variables $r,z$ are equivalent to the axisymmetric swirl-free Euler equations~\eqref{Cauchy problem for the axisymmetric Euler equations} on the halfplane $\mathbb{H}=(0,\infty)\times \mathbb{R}$. It will even turn out that the local energy inequalities are equivalent. This link (Lemma~\ref{equivalence of weak solutions for compressilbe Euler and axisymmetric Euler}) is one of the key novel ingredients and motivates us to study weak solutions of~\eqref{Cauchy problem for the axisymmetric Euler equations} with techniques developed for~\eqref{compressible Euler equations}. 

In a second step, we will study weak solutions of~\eqref{compressible Euler equations in terms of the momentum} via convex integration. For rather technical reasons, we will consider~\eqref{compressible Euler equations in terms of the momentum} on a strip $(\delta,R)\times \mathbb{R}$ of the halfplane $\mathbb{H}$. The second main ingredient will be the construction of a new suitable subsolution for~\eqref{compressible Euler equations in terms of the momentum}, Lemma~\ref{Konstruktion Sulsg}. In fact, by choosing a density only depending on $r$, the convex integration scheme enables us to construct subsolutions which are periodic in the vertical $z$ direction. This means in the construction scheme of subsolutions on the $r$-$z$ plane we will consider momentum fields which for any fixed $r$ point upwards in the $r$-$z$ plane. The construction of such subsolutions motivated us to study weak solutions on a strip $(\delta,R)\times \mathbb{R}$ of the halfplane $\mathbb{H}$ which are periodic in $z$ direction and satisfy a slip boundary condition. Finally we will use these weak solutions to construct admissible weak solutions for~\eqref{Cauchy problem for the axisymmetric Euler equations} in $\mathbb{H}$. By returning to Cartesian coordinates we end up with admissible weak solutions of~\eqref{incompressible 3D Euler equations} which are axisymmetric and swirl-free, cf.~Remark \ref{Remark weak solutions 3D and axisymmetric}. 

A proof of Theorem~\ref{symmetry breaking introduction} will be provided in Section 4. The key novel ingredient for this result is the construction of new subsolutions. The idea is to follow the construction of weak solutions for the $2D$ Euler equations for vortex sheet initial data~\cite{SZEKELYHIDI20111063} and rotational initial data~\cite{BardosSzWiedemann} and to lift it to the $3D$ axisymmetric swirl-free framework. More precisely, for a specific axisymmetric swirl-free initial datum $v^0$, we will reduce the existence of suitable subsolutions to the existence of a rarefaction solution of a Burgers equation. The existence of weak solutions then follows by convex integration. Moreover, by constructing a suitable energy profile, we will show that many admissible weak solutions will break the axisymmetry in the evolution.

A first example for symmetry breaking of the Euler equations goes back to Scheffer~{\cite{Scheffer1993}} (he starts with zero initial datum, which of course possesses any conceivable symmetry). For planar flows in three dimensions, symmetry-breaking has been investigated in~\cite{BardosNussenzveigTiti,Wiedemann}. Moreover, symmetry breaking has been considered in \cite{SZEKELYHIDI20111063} for the $2D$ incompressible Euler equations with vortex sheet initial data in the space periodic setting. Based on this, in~\cite{BardosSzWiedemann} a similar result was considered for the $2D$ incompressible Euler equations with rotational (i.e., pure-swirl) initial data. 
\section{Weak solutions of the $3D$ axisymmetric swirl-free Euler equations and the $2D$ isentropic compressible Euler equations}\label{Section Axisymmetric swirl-free Euler equations vs isentropic compressible Euler equations}
Let us begin by introducing the notion of weak solutions for (\ref{Cauchy problem for the axisymmetric Euler equations}) and (\ref{compressible Euler equations}):
\begin{Defi}
Let $v=v(r,\theta,z)$ be a vector field in cylindrical coordinates $(r,\theta,z)$ and 
\begin{align*}
e_r=\Vector{\cos(\theta),\sin(\theta),0}, \quad e_{\theta}=\Vector{-\sin(\theta),\cos(\theta),0},\quad e_z=\Vector{0,0,1}
\end{align*}
be the unit vectors in cylindrical coordinates.
\begin{enumerate}
\item[(i)] $v$ is called {\em axisymmetric} if $v$ has cylindrical symmetry in space, i.e., $v=v(r,z)$.
\item[(ii)] $v$ is called {\em swirl-free} if its angular component vanishes, i.e., $v_{\theta}=v\cdot e_\theta=0$.
\item[(iii)] $v$ is said to be axisymmetric or swirl-free {\em almost everywhere} if there exists an axisymmetric or swirl-free vector field $\tilde v$, respectively, such that $v=\tilde v$ almost everywhere.
\end{enumerate}
\end{Defi}
\begin{rk}
In the following we have to distinguish between the gradients in Cartesian and cylindrical coordinates. We will indicate the gradient in Cartesian coordinates by a subscript $x$. When we work in cylindrical coordinates we will use $\nabla$ to denote the gradient with respect to the variables $r$ and $z$, i.e. $\nabla = \begin{pmatrix}
\partial_r\\\partial_z
\end{pmatrix}$. Similarly $\operatorname{div}_x$ is the divergence in Cartesian coordinates and $\operatorname{div}$ denotes the divergence with respect to the variables $r,z$, that is: $\operatorname{div}v(r,z)=\partial_r v\cdot e_r+\partial_z v\cdot e_z$.

Note carefully that $\nabla$ and $\operatorname{div}$ are {\em not} the three dimensional gradient and divergence operators expressed in cylindrical coordinates; rather, they denote the gradient and divergence operators when $r,z$ are considered as $2D$ Cartesian coordinates, see also Remark~\ref{rzinterpret} below.
\end{rk}
\begin{Defi}
Let $v^0\in L^2_{loc}(\mathbb{R}^3;\mathbb{R}^3)$ be axisymmetric and swirl-free and ${\Omega}\subset \mathbb{H}=(0,\infty)\times\mathbb{R}$.
\begin{enumerate}[(i)]
\item We say that an almost everywhere axisymmetric and swirl-free vector field $v\in L^{\infty}(0,T;L^2_{loc}(\mathbb{R}^3;\mathbb{R}^3))$ is a weak solution of the axi\-symmetric swirl-free Euler equations (\ref{Cauchy problem for the axisymmetric Euler equations}) in ${\Omega}\times (0,T)$ if $v$ is weakly divergence-free, i.e.
\begin{align*}
\int_{\Omega} v(r,z,t)\cdot \nabla \varphi(r,z)r dz dr=0
\end{align*}
for every axisymmetric $\varphi\in C_c^{\infty}({\Omega}\times [0,T))$, for a.e. $t\in (0,T)$, and $v=v_r e_r+ v_z e_z$ satisfies
\begin{align}\label{weak formulation axisymmetric swirl-free Euler equations}
\int_0^T\int_{{\Omega}}&\partial_t \varphi r {v} + r\left(v_r^2\partial_r\varphi_r + v_rv_z\partial_z\varphi_r+v_rv_z \partial_r \varphi_z+v_z^2\partial_z\varphi_z\right) dz dr dt\\
&+\int_0^T\int_{{\Omega}}r\pi (\partial_r \varphi_r +\frac{\varphi_r}{r}+ \partial_z \varphi_z) dz dr dt + \int_{{\Omega}} r {v}^0\varphi(\cdot,0) dz dr=0\notag
\end{align}
for every axisymmetric $\varphi=\varphi_re_r+\varphi_z e_z\in C_c^{\infty}({\Omega}\times [0,T);\mathbb{R}^3)$.
\item We say that a weak solution $v$ of the axisymmetric swirl-free Euler equations is admissible if $v=v_r e_r + v_z e_z$ satisfies the local energy inequality 
\begin{align}\label{energy inequality axisymmetric swirl-free Euler eqations}
\frac{1}{2}\partial_t |{v}|^2r+ \operatorname{div}\left[\left(\frac{|{v}|^2}{2}+\pi\right)r{v}\right]\leq 0 \text{ in } \Omega
\end{align}
in the sense of distributions, i.e., we have
\begin{align*}
\int_0^T&\int_{{\Omega}} \frac{1}{2}\partial_t \varphi|{v}|^2r+ \left[\left(\frac{|{v}|^2}{2}+\pi\right)\right]r(v_r \partial_r + v_z \partial_z )\varphi dz dr dt\\
&+ \frac{1}{2}\int_{{\Omega}} \varphi(0)|{v}^0|^2 r dz dr\geq 0
\end{align*}
for every axisymmetric $\varphi\in C_c^{\infty}(\Omega\times [0,T))$.
\end{enumerate}
\end{Defi}
\begin{rk}\label{Remark weak solutions 3D and axisymmetric}
\begin{enumerate}[(i)]
\item The usual weak formulation of the $3D$ Euler equations and the formulation given here are equivalent in the swirl-free axisymmetric situation, that is: An almost everywhere swirl-free axisymmetric vector field is a weak solution of the $3D$ Euler equations if and only it fulfills Definition~\ref{weak formulation axisymmetric swirl-free Euler equations}. This can be seen by a coordinate transformation, using the expression for the divergence in cylindrical coordinates and the invariance of the Euclidean inner product and the tensor product of two vectors under coordinate transformation. Particularly, for the advection term, we obtain
\begin{equation}\label{polartransf}
\langle v\otimes v,\nabla_x\varphi\rangle=v_r^2\partial_r\varphi_r + v_rv_z\partial_z\varphi_r+v_rv_z \partial_r \varphi_z+v_z^2\partial_z\varphi_z.
\end{equation}
An explicit verification of this identity is given in the appendix.

\item A straightforward calculation shows that~\eqref{energy inequality axisymmetric swirl-free Euler eqations} is equivalent to the local energy inequality in $\tilde{\Omega}$, where $\tilde\Omega$ is the rotation of $\Omega$ about the $z$-axis, given by 
\begin{equation*}
\tilde\Omega=\{(r,\theta,z): (r,z)\in\Omega,\quad \theta\in\mathbb{R}\}\subset\mathbb{R}^3.
\end{equation*}
\end{enumerate}
\end{rk}
\begin{Defi}\label{defweakcomp}
Let ${\Omega}\subset \mathbb{R}^2$ and let $\rho^0\in L^{\gamma}_{loc}({\Omega})$. Let $v^0$ and $\rho^0$ be such that $\rho^0|v^0|^2\in L^1_{loc}({\Omega})$.
\begin{enumerate}[(i)]
\item We say that $(\rho,v)=(\rho(r,z),v(r,z))$ is a {\em weak solution} of the compressible Euler equations (\ref{compressible Euler equations}) in ${\Omega}\times (0,T)$ with pressure $p(\rho)=\rho^{\gamma}$ in ${\Omega}\times (0,T)$ if $\rho\in L^{\gamma}_{loc}({\Omega})$, $\rho|v|^2\in L^1_{loc}({\Omega}\times (0,T))$ and
\begin{align*}
\int_0^T\int_{{\Omega}}\partial_t \psi \rho v + \left\langle \rho v\otimes v, \nabla \psi\right\rangle  + \rho^{\gamma} \operatorname{div}(\psi) dz dr dt + \int_{{\Omega}} \rho^0 v(\cdot,0)\psi(\cdot,0) dz dr=0\\
\int_0^T\int_{{\Omega}} \rho \partial_t \varphi+ \rho v\cdot \nabla \varphi dz dr dt + \int_{{\Omega}} \rho^0 \varphi(\cdot,0) dz dr =0
\end{align*}
for all $\psi\in C_c^{\infty}({\Omega}\times [0,T);\mathbb{R}^2)$, $\varphi\in C_c^{\infty}({\Omega}\times [0,T))$.
\item We say that a weak solution of the compressible Euler equations~\eqref{compressible Euler equations} is {\em admissible} if it satisfies the local energy inequality 
\begin{align*}
\partial_t \left(\frac{\rho |v|^2}{2}+\frac{1}{\gamma -1}\rho^{\gamma}\right)+ \operatorname{div}\left[\left(\frac{\rho|v|^2}{2}+\frac{\gamma}{\gamma -1}\rho^{\gamma}\right)v\right]\leq 0 \text{ in } {\Omega}
\end{align*}
in the sense of distributions, i.e. 
\begin{align*}
\int_0^T&\int_{{\Omega}} \partial_t \varphi\left(\frac{\rho|v|^2}{2}+\frac{1}{\gamma -1}r^{\gamma}\right)+ \left[\left(\frac{|v|^2}{2}+\frac{\gamma}{\gamma -1}\rho^{\gamma}\right) v\right] \cdot \nabla \varphi dz dr dt\\
&+ \frac{1}{2}\int_{{\Omega}} \varphi(0)~\rho^0|v^0|^2 + \frac{1}{\gamma -1} \varphi(0)~(\rho^0)^{\gamma} dz dr\geq 0
\end{align*}
for every $\varphi\in C_c^{\infty}({\Omega}\times [0,T))$.
\end{enumerate}
\end{Defi}
\begin{rk}\label{rzinterpret}
Let us point out that the variables $r$, $z$ will be used in two different contexts. When we consider the $2D$ compressible Euler equations in velocity or in momentum we treat $r$ and $z$ as $2D$ variables and hence one can think of them as Cartesian coordinates. This unusual notation is justified by Lemma~\ref{equivalence of weak solutions for compressilbe Euler and axisymmetric Euler} below.

In contrast, $r$ and $z$ are cylindrical coordinates if we speak about the axisymmetric Euler equations. For the sake of notation we will write the components of $2D$ vector fields in Cartesian coordinates $r, z$ as $v=(v_r,v_z)$. When we talk about axisymmetric vector fields, we will indicate this by writing $v= v_r e_r + v_z e_z$, where  $e_r, e_{\theta}, e_z$ are the basis vectors in cylindrical coordinates.
\end{rk}
Let us now present an elementary but unexpected link between the $2D$ compressible and the $3D$ axisymmetric swirl-free Euler equations, which is key to our main result, Theorem~\ref{non-uniqueness under symmetry preservation introduction}. We consider the isentropic compressible $2D$ Euler equations (\ref{compressible Euler equations}) in the $r$-$z$ plane with polytropic pressure law $p(\rho)=\rho^{\gamma}$ for some constant $\gamma >1$. Consider the particular choice of density $\rho_0$, given independently of time as
\begin{align}\label{Dichte}
\rho_0(r,z,t)=\begin{cases}r &r>0\\
0 &r\leq0.
\end{cases}
\end{align}
Then we can reformulate the isentropic compressible Euler equations~\eqref{compressible Euler equations} on the plane-time cylinder $\mathbb{H}\times(0,T)$. In components we have
\begin{align*}
\begin{cases}
\partial_t(rv_r)+\partial_r(rv_r^2)+\partial_z(rv_zv_r)+\partial_r p(r,z)&=0\\
\partial_t(rv_z)+\partial_r(rv_rv_z)+\partial_z(rv_z^2)&=0\\
\partial_r(rv_r)+\partial_z(rv_z)&=0.
\end{cases}
\end{align*}
Due to the divergence condition this simplifies to
\begin{align*}
\begin{cases}
\partial_t(rv_r)+r\begin{pmatrix}
v_r\\v_z
\end{pmatrix}\cdot \begin{pmatrix}
\partial_r\\ \partial_z
\end{pmatrix}v_r+\partial_r p(r,z)&=0\\
\partial_t(rv_z)+r\begin{pmatrix}
v_r\\v_z
\end{pmatrix}\cdot \begin{pmatrix}
\partial_r\\ \partial_z
\end{pmatrix}v_z &=0\\
\partial_r(rv_r)+\partial_z(rv_z)&=0.
\end{cases}
\end{align*}
Dividing the first and second line by $r$ we end up with the axisymmetric swirl-free Euler equations
\begin{align}\label{axisymmetric Euler equations}
\begin{cases}
\partial_t v_r+ (v_r \partial_r + v_z \partial_z) v_r+\partial_r \pi(r,z)&=0\\
\partial_t v_z+(v_r \partial_r + v_z \partial_z)v_z &=0\\
\partial_r(rv_r)+\partial_z(rv_z)&=0
\end{cases}
\end{align} 
with pressure $\pi$,
\begin{align}\label{Definition 3D Druck}
\pi(r)=\int_0^r \frac{1}{s}\partial_s p(s)ds=\frac{\gamma}{\gamma -1}r^{\gamma-1}.
\end{align}
This calculation culminates in the following result:
\begin{lem}\label{equivalence of weak solutions for compressilbe Euler and axisymmetric Euler}
Let $v=(v_r,v_z)\in L^2(\Omega\times [0,T);\mathbb{R}^2)$ be a weak solution of the compressible $2D$ Euler equations (\ref{compressible Euler equations}) with density $\rho(r)=r$ and pressure $p(\rho)=\rho^{\gamma}$ for $\gamma >1$. Then $\tilde{v}=v_r e_r + v_z e_z$ is a weak solution of the $3D$ incompressible axisymmetric swirl-free Euler equations with pressure $\pi(r,z)=\frac{\gamma}{\gamma -1}r^{\gamma-1}$. Moreover if $v$ satisfies the local energy inequality for~\eqref{compressible Euler equations} then $\tilde v$ satisfies the local energy inequality for~\eqref{Cauchy problem for the axisymmetric Euler equations} with pressure $\pi(r)=\frac{\gamma}{\gamma -1}r^{\gamma-1}$.
\end{lem}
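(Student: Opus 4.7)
My approach is to verify directly that the weak formulations of the two systems coincide after the substitution $\rho(r)=r$. The strong-form derivation has already been exhibited in the display preceding the lemma, so what remains is to translate it into distributional form.

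First, for the continuity equation, substituting $\rho^0=\rho=r$ and using that $r$ is independent of $t$, one can evaluate the $t$-integral in the term $\int_0^T\!\int_\Omega r\,\partial_t\varphi \,dz\,dr\,dt$; it cancels the initial datum contribution $\int_\Omega r\varphi(\cdot,0)\,dz\,dr$ by the fundamental theorem of calculus. Testing against functions of product form $\varphi(r,z,t)=\chi(t)\phi(r,z)$ with $\chi$ arbitrary then yields $\int_\Omega rv(\cdot,t)\cdot\nabla\phi\,dz\,dr=0$ for a.e.\ $t$, which is precisely the weak axisymmetric divergence-free condition.

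Second, for the momentum equation, the advection term $\langle\rho v\otimes v,\nabla\psi\rangle$ becomes $\langle rv\otimes v,\nabla\psi\rangle$ upon the substitution $\rho=r$, and matches the corresponding axisymmetric expression term by term via~\eqref{polartransf}. The only nontrivial step is the pressure term. Using the elementary identity $\nabla(r^\gamma)=\nabla(r\pi)-\pi\,e_r$ where $\pi=\tfrac{\gamma}{\gamma-1}r^{\gamma-1}$ (both sides equal $(\gamma r^{\gamma-1},0)^T$ by direct computation), integration by parts gives
\begin{align*}
\int_\Omega r^\gamma \operatorname{div}\psi\,dz\,dr &= \int_\Omega r\pi\operatorname{div}\psi\,dz\,dr+\int_\Omega\pi\,\psi_r\,dz\,dr\\
&= \int_\Omega r\pi\left(\partial_r\psi_r+\frac{\psi_r}{r}+\partial_z\psi_z\right)dz\,dr,
\end{align*}
which is exactly the pressure contribution in the axisymmetric weak formulation. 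Combining with the advection and time-derivative pieces yields the weak momentum identity for $\tilde v$.

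Third, for the local energy inequality the situation is even more transparent. Since $\rho=r$ is time-independent, the weak-form contributions $\int_0^T\!\int_\Omega\partial_t\varphi\,\tfrac{r^\gamma}{\gamma-1}\,dz\,dr\,dt$ and $\tfrac{1}{\gamma-1}\int_\Omega\varphi(0)\,r^\gamma\,dz\,dr$ cancel by the fundamental theorem of calculus; what remains from the time-derivative side is $\tfrac12\partial_t(r|v|^2)$. The compressible flux can be factored as $\bigl(\tfrac{r|v|^2}{2}+\tfrac{\gamma}{\gamma-1}r^\gamma\bigr)v=r\bigl(\tfrac{|v|^2}{2}+\pi\bigr)v$, which is exactly the axisymmetric flux. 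Hence the two local energy inequalities are literally identical.

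The only genuine obstacle is the pressure identity in the second step, which hinges on the precise choice $\pi=\tfrac{\gamma}{\gamma-1}r^{\gamma-1}$ and on carefully tracking the extra $\tfrac{\psi_r}{r}$ term arising from the cylindrical geometry; everything else is a straightforward matching of algebraic structure between the two weak formulations.
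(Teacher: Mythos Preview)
Your proof is correct and follows essentially the same approach as the paper: direct verification that the weak formulations coincide once $\rho=r$ is substituted, with the internal-energy contributions cancelling because $\rho$ is time-independent. The paper's written proof actually only spells out the energy-inequality part (relying on the preceding strong-form derivation for the momentum equation), whereas you go further and give an explicit weak-level argument for the pressure term in the momentum equation via the identity $\nabla(r^\gamma)=\nabla(r\pi)-\pi\,e_r$; this is a nice addition but not a different method.
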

At the end of the section we will prove the remaining part of the lemma and show that the local energy inequalities are equivalent. Let us now recall the main result which is an ill-posedness result in the class of axisymmetric swirl-free weak solutions of~\eqref{incompressible 3D Euler equations} (cf. Theorem \ref{non-uniqueness under symmetry preservation introduction}).
\begin{thm}\label{main result}
There exist $v^0\in L^2_{loc}(\mathbb{R}^3)$ axisymmetric and swirl-free and $T>0$
for which there exist infinitely many admissible weak solutions $v\in L^{\infty}(0,T;L^2_{loc}(\mathbb{R}^3;\mathbb{R}^3))$ of (\ref{incompressible 3D Euler equations}) which in cylindrical coordinates satisfy the axisymmetric swirl-free Euler equations (\ref{Cauchy problem for the axisymmetric Euler equations}) in $\mathbb{H}\times [0,T)$.
\end{thm}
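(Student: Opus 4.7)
The strategy is the one sketched in Section~1. By Lemma~\ref{equivalence of weak solutions for compressilbe Euler and axisymmetric Euler} together with Remark~\ref{Remark weak solutions 3D and axisymmetric}, producing infinitely many admissible axisymmetric swirl-free weak solutions of~\eqref{incompressible 3D Euler equations} reduces to producing infinitely many admissible weak solutions $(\rho,v)$ of the 2D isentropic compressible Euler system~\eqref{compressible Euler equations} on $\mathbb{H}\times(0,T)$ with the time-independent density $\rho(r)=r$ and pressure $p(\rho)=\rho^\gamma$, all sharing a common initial velocity $v^0$. The entire remainder of the argument therefore takes place on the $r,z$-halfplane.

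Because $\rho(r)=r$ degenerates at $r=0$ and grows as $r\to\infty$, Chiodaroli's convex integration scheme cannot be applied globally on $\mathbb{H}$. We fix $0<\delta<R<\infty$ and localize to the strip $S_{\delta,R}=(\delta,R)\times\mathbb{R}$, working with the momentum formulation~\eqref{compressible Euler equations in terms of the momentum}, imposing $z$-periodicity (so that $z\in\mathbb{T}$) and slip boundary conditions at $r=\delta,R$. The key input is Lemma~\ref{Konstruktion Sulsg}, which constructs a subsolution on $S_{\delta,R}\times\mathbb{T}\times(0,T)$: a triple $(m,\mathbb{U},q)$ with $\mathbb{U}$ symmetric and trace-free satisfying the linear relations
\begin{equation*}
\partial_t m+\operatorname{div}\mathbb{U}+\nabla q=0,\qquad \partial_z m_z=0,
\end{equation*}
together with a strict pointwise bound $\frac{m\otimes m}{\rho}-\mathbb{U}<\frac{C(r,z,t)}{2}\mathrm{Id}$ on an open subsolution set, and with the geometric feature that $m$ points purely in the vertical direction. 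Choosing $m$ vertical and $\rho$ depending only on $r$ is what renders $\partial_z m_z=0$ compatible with $z$-periodicity and the slip boundary, and is what allows the subsolution to coexist with the $r$-dependent density.

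With such a subsolution at hand, the convex integration machinery of~\cite{convexintegration,convexintegration2}, in its compressible incarnation from~\cite{Chiodaroli,AkramovWiedemann}, produces uncountably many weak solutions $m$ of~\eqref{compressible Euler equations in terms of the momentum} on $S_{\delta,R}\times\mathbb{T}\times(0,T)$ that agree with the subsolution at $t=0$ and satisfy $|m|^2/\rho=C$ a.e.\ on the subsolution set; a suitable choice of $C$ then gives the local energy inequality of Definition~\ref{defweakcomp}. Each such $m$ is extended to all of $\mathbb{H}$ by a compatible background state outside the strip, the slip condition ensuring that the momentum equation and the local energy inequality persist across $\{r=\delta\}$ and $\{r=R\}$ in the sense of distributions. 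Setting $v=m/\rho$ and applying Lemma~\ref{equivalence of weak solutions for compressilbe Euler and axisymmetric Euler} and Remark~\ref{Remark weak solutions 3D and axisymmetric} then yields the claimed admissible axisymmetric swirl-free weak solutions of~\eqref{incompressible 3D Euler equations}.

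The main obstacle is the subsolution construction itself. Unlike the constant-density setting, the admissible set of states and the relevant wave-cone depend on the spatial position through $\rho(r)=r$, and one must ensure that this admissible set has non-empty relative interior uniformly on $S_{\delta,R}$ while simultaneously forcing $m$ to be purely vertical, $\rho$ stationary, and the slip boundary together with $z$-periodicity to be respected. A secondary but nontrivial point is the extension of the strip solution to all of $\mathbb{H}$: the energy profile $C$ and the background state at $r\le\delta$ and $r\ge R$ must be matched so that both the weak momentum equation and the local energy inequality cross the artificial boundaries without producing spurious distributional contributions.
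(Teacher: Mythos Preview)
Your outline follows the paper's approach essentially verbatim: invoke Theorem~\ref{non-uniqueness for compressible Euler equations with energy inequality} (which packages Lemma~\ref{Konstruktion Sulsg}, the double convex integration of Lemma~\ref{Subsolution lemma}, and the admissibility argument via a differential inequality for $\chi$) to obtain infinitely many admissible momentum solutions $(\rho_0,m)$ on the strip $\Omega=(\delta,R)\times\mathbb{T}$, set $v=m/r$, and then apply Lemma~\ref{equivalence of weak solutions for compressilbe Euler and axisymmetric Euler} and Remark~\ref{Remark weak solutions 3D and axisymmetric} to pass to~\eqref{incompressible 3D Euler equations}.

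Two small points. First, the linear constraint in the subsolution system is $\operatorname{div} m=\partial_r m_r+\partial_z m_z=0$, not $\partial_z m_z=0$; the latter is what it reduces to only for the particular vertical ansatz $\tilde m=(0,\tilde\chi(t)\rho_0(r))$ of Lemma~\ref{Konstruktion Sulsg}, whereas the perturbed solutions produced by convex integration have nontrivial $m_r$. Second, your ``secondary but nontrivial'' extension worry is in fact trivial in the paper's setup: because $m\in H(\Omega;\mathbb{R}^2)$ satisfies the slip condition $m_r=0$ on $\{r=\delta\}\cup\{r=R\}$, the corresponding $\tilde v$ is solenoidal with vanishing normal trace on $\partial\tilde\Omega$, and one simply extends $\tilde v$ and $\tilde v^0$ by zero and $\pi$ by a constant outside $\tilde\Omega$. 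No background state or matching of the energy profile is required, and no boundary terms arise in the weak formulation or in the local energy inequality.
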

This result will be proved at the end of the section. Before that, we state results for the existence of weak solutions of (\ref{compressible Euler equations in terms of the momentum}), which represent one of the building blocks of this work.
In a first step we give the definition of admissible weak solutions for~\eqref{compressible Euler equations in terms of the momentum} on a certain domain. For this purpose let $0<\delta<R$ and define
\begin{align*}
\Omega\coloneqq (\delta,R)\times \mathbb{T}
\end{align*}
where $\mathbb{T}=\mathbb{R}/\mathbb{Z}$ is the torus. Analogously we denote by $\Omega_{\mathbb{R}}$ the strip $\Omega_{\mathbb{R}}\coloneqq (\delta,R)\times \mathbb{R}$.
In order to formulate boundary values, we introduce the space of solenoidal momentum fields $H(\Omega;\mathbb{R}^2)$, which is defined as the completion of
\begin{align}\label{solenoidal vector fields}
\{m\in C_c^{\infty}(\Omega;\mathbb{R}^2):\operatorname{div}(m)=0\}
\end{align}
with respect to the $L^2(\Omega;\mathbb{R}^2)$ topology. In order to keep the notation short we will use $\|\cdot\|_{L^2(\Omega)}$ instead of $\|\cdot\|_{L^2(\Omega;\mathbb{R}^2)}$. With $H_w(\Omega;\mathbb{R}^2)$ we denote the same space endowed with the weak topology.
Thanks to this definition any $m\in H(\Omega;\mathbb{R}^2)$ is incompressible, and in the sense of traces \cite[Theorem 1.2.~ and Remark 1.3.]{Temam} $m$ satisfies the slip boundary condition $m\cdot n=0$ in $H^{-\frac{1}{2}}(\Gamma)$ where $n$ is the outer unit normal to the boundary $\Gamma$ of $\Omega$. In fact, this means $m\in H(\Omega;\mathbb{R}^2)$ is periodic in $z$ direction and fulfills $m_r=0$ on $\{r=\delta\}\cup \{r=R\}$ in the trace sense just mentioned. 
%
%
%
%
Now, we state an existence and non-uniqueness result for weak solutions of the compressible Euler equations in momentum formulation~\eqref{compressible Euler equations in terms of the momentum}. Of course, a weak solution of this system is a pair $(\rho,m)$ such that $(\rho,v)$ is a weak solution in the sense of Definition~\ref{defweakcomp} where $v=\frac m\rho$. (The density constructed in this paper is bounded away from zero, so that the denominator poses no issue.) 

In contrast to to the construction of weak solutions in \cite{AkramovWiedemann}, where the authors consider weak solutions with compact support, in the following two theorems we will investigate weak solutions in $\Omega$ with mixed boundary conditions, these are periodic boundary conditions in $z$ direction and slip boundary conditions on $\{r=\delta\}\cup \{r=R\}$ in the above-mentioned trace sense. We will provide proofs and the construction in Section 3.
\begin{thm}\label{non-uniqueness for compressible Euler equations}
Let $T>0$, $\rho_0$ be as in (\ref{Dichte}), and $p=|\cdot|^{\gamma}$ for a $\gamma >1$. Then for any $0<\delta<1<R$, setting $\Omega=(\delta,R)\times \mathbb{T}$, there exists $m^0\in (L^{\infty}\cap H)(\Omega;\mathbb{R}^2)$ so that there are infinitely many admissible weak solutions $(\rho,m)\in C^1(\Omega)\times C([0,T],H_w(\Omega;\mathbb{R}^2))$ of 
\begin{align}\label{Sublsg kompressible Euler}
\begin{cases}
&\partial_t m + \operatorname{div}(\frac{m\otimes m}{\rho})+ \nabla p(\rho)=0\\
&\partial_t \rho + \operatorname{div}(m)=0\\
&m(\cdot,0)=m^0\\
&m_r=0 \text{ on } \{r=\delta\}\cup \{r=R\}
\end{cases}
\end{align}
on $\Omega\times [0,T]$ with density $\rho(r,z)=\rho_0(r,z)$. Moreover, the weak solutions satisfy
\begin{align}\label{pointwise constraint subsolutions compressible Euler}
\begin{cases}
|m(r,z,t)|^2=\rho_0(r,z)\chi(t) &\text{ a.e. in } \Omega\times [0,T)\\
|m^0(r,z)|^2=\rho_0(r,z)\chi(0) &\text{ a.e. in } \Omega
\end{cases}
\end{align}
for some $\chi \in C^{\infty}([0,T];\mathbb{R}^+)$ to be fixed in the construction.
\end{thm}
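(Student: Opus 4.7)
The plan is to adapt the convex integration/Baire category framework developed by De Lellis--Sz\'ekelyhidi and first applied to the semi-stationary compressible Euler system by Chiodaroli~\cite{Chiodaroli} and Akramov--Wiedemann~\cite{AkramovWiedemann}, now to the strip $\Omega=(\delta,R)\times\mathbb{T}$ with the stationary density $\rho=\rho_0(r)=r$. As a first step I would reformulate (\ref{Sublsg kompressible Euler}) as a linear PDE with a pointwise constraint. Since $\rho_0$ is time-independent and bounded below by $\delta$ on $\Omega$, the continuity equation collapses to $\operatorname{div} m=0$; combined with $m_r=0$ on $\{r=\delta\}\cup\{r=R\}$, this is precisely $m(\cdot,t)\in H(\Omega;\mathbb{R}^2)$. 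Introducing a symmetric trace-free matrix field $U\in S_0^{2\times 2}$ and a modified scalar pressure $q$, the momentum equation rewrites as the linear system
\begin{align*}
\partial_t m+\operatorname{div} U+\nabla q&=0,\\
\operatorname{div} m&=0,
\end{align*}
augmented by the pointwise constraint $\tfrac{m\otimes m}{\rho_0}-U=\tfrac{\chi(t)}{2}I$. Any such triple $(m,U,q)$ with $q=p(\rho_0)+\chi(t)/2$ yields a weak solution of (\ref{Sublsg kompressible Euler}) automatically satisfying $|m|^2=\rho_0\chi$.

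Next, via the forthcoming Lemma~\ref{Konstruktion Sulsg}, I would build a strict subsolution: a triple $(\bar m,\bar U,\bar q)$ continuous on $\overline\Omega\times[0,T]$, solving the linear system above in $\Omega\times(0,T)$, realising the periodic-in-$z$ and slip-in-$r$ boundary conditions, and strictly satisfying the relaxed inequality
\begin{align*}
\lambda_{\max}\!\left(\tfrac{\bar m\otimes\bar m}{\rho_0}-\bar U\right)<\tfrac{\chi(t)}{2}
\end{align*}
pointwise in $\Omega\times(0,T)$, for a suitable smooth profile $\chi\colon[0,T]\to\mathbb{R}^+$. Because $\rho_0$ is radial, a natural ansatz is $\bar m(r,z,t)=\bar m_z(r,t)\,e_z$, which trivially satisfies divergence-freeness, the slip boundary condition and $z$-periodicity; $\bar U$ can then be chosen diagonal and $\chi$ large enough on a suitable time window to make the inequality strict.

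Given such a subsolution, I would run the standard Baire-category argument. Let $X_0$ be the set of strict subsolutions launching from $m^0:=\bar m(\cdot,0)$ and coinciding with $\bar m$ on the parabolic boundary, and let $X$ be its closure in $C([0,T];H_w(\Omega;\mathbb{R}^2))$. The functionals $m\mapsto\int_\Omega|m(\cdot,t)|^2\,drdz$ are Baire-1 on $X$, so their common set of continuity points is residual. A Tartar-type plane-wave perturbation lemma, localised with cutoffs that respect the mixed boundary conditions (the nonlinearity $m\otimes m/\rho_0$ poses no additional difficulty since $\rho_0$ is smooth and bounded away from zero on $\Omega$), then identifies these continuity points with exact solutions saturating $|m|^2=\rho_0\chi$, producing infinitely many solutions with the same initial datum $m^0$. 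Admissibility reduces, under the saturated constraint and the time-independence of $\rho_0$, to $\chi'(t)\le 0$, which we ensure when selecting~$\chi$.

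The principal obstacle lies in the subsolution step. The available constructions in the literature are either compactly supported or fully space-periodic, whereas ours must live on the strip $(\delta,R)\times\mathbb{T}$, respect the slip boundary condition in $r$ together with periodicity in $z$, and remain strictly inside the relaxed constraint set for all $(r,z,t)\in\Omega\times(0,T)$. It is precisely the radial density profile $\rho_0(r)=r$ which renders the vertical-momentum ansatz compatible with all these requirements simultaneously, and this is what Lemma~\ref{Konstruktion Sulsg} will achieve.
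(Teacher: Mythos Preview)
Your outline captures the overall convex-integration machinery correctly, but there is a genuine gap concerning the initial datum. You set $m^0:=\bar m(\cdot,0)$, where $\bar m$ is the smooth subsolution from Lemma~\ref{Konstruktion Sulsg}. By construction, $\bar m$ satisfies the \emph{strict} inequality $e(\rho_0,\bar m,\bar U)<\chi/2$, hence $|\bar m(\cdot,0)|^2<\rho_0\chi(0)$ pointwise on $\Omega$. But the Baire/perturbation argument, as you correctly describe it, produces solutions $m$ that agree with the subsolution at $t=0$ and $t=T$; thus $m(\cdot,0)=\bar m(\cdot,0)$ and the second line of~\eqref{pointwise constraint subsolutions compressible Euler}, namely $|m^0|^2=\rho_0\chi(0)$ a.e., fails outright. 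The paper resolves this with an additional ``double convex integration'' step (Lemma~\ref{Subsolution lemma}): starting from $(\tilde m,\tilde U)$ of Lemma~\ref{Konstruktion Sulsg}, one constructs a new subsolution $(m_0,U_0)$ which is still strict on $\Omega\times(0,T]$ but has $|m_0(\cdot,0)|^2=\rho_0\chi(0)$ a.e. Only then does one set $m^0:=m_0(\cdot,0)$ and apply the subsolution criterion. This step is not cosmetic: the saturated initial condition is precisely what allows the subsequent energy-inequality argument (Theorem~\ref{non-uniqueness for compressible Euler equations with energy inequality}) to compare $|m(\cdot,t)|^2$ with $|m^0|^2$ without a gap at $t=0$.

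A secondary point: your claim that admissibility ``reduces to $\chi'(t)\le 0$'' is too optimistic. Because $\rho_0=r$ varies in space, the local energy inequality~\eqref{local energy inequality for incompressible Euler equations momentum} contains transport terms of the form $m\cdot\nabla\bigl(\varepsilon(\rho_0)+p(\rho_0)/\rho_0\bigr)$ and $\tfrac{\chi}{2}\,m\cdot\nabla(1/\rho_0)$ which do not vanish; the correct sufficient condition is the nonlinear differential inequality~\eqref{differential inequality} for $\chi$, and its satisfiability is what forces the possibly small existence time $T$ in Theorem~\ref{non-uniqueness for compressible Euler equations with energy inequality}. In any case, admissibility is not part of the proof of the present theorem in the paper; it is handled separately afterwards.
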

From the weak solutions exhibited in Theorem \ref{non-uniqueness for compressible Euler equations}, we can select weak solutions which even satisfy the local energy inequality, at least up to a small time. This is the content of the following Theorem. We use $\varepsilon=\varepsilon(\rho)$ to denote the internal energy determined by $p(\rho)=\rho^2\varepsilon'(\rho)$.
\begin{thm}\label{non-uniqueness for compressible Euler equations with energy inequality}
Under the assumptions of Theorem~\ref{non-uniqueness for compressible Euler equations}, there exist $T>0$ and $m^0\in (L^{\infty}\cap H)(\Omega;\mathbb{R}^2)$ and infinitely many admissible weak solutions $(\rho,m)\in C^1(\Omega)\times C([0,T],H_w(\Omega;\mathbb{R}^2))$ fulfilling (\ref{Sublsg kompressible Euler}), (\ref{pointwise constraint subsolutions compressible Euler}) and $\rho=\rho_0$ which satisfy the admissibility condition
\begin{align}\label{local energy inequality for incompressible Euler equations momentum}
&\int_0^T\int_{\Omega} \left(\rho\varepsilon(\rho)+\frac{1}{2}\frac{|m|^2}{\rho}\right)\partial_t \varphi + \left(\varepsilon(\rho)+\frac{1}{2}\frac{|m|^2}{\rho^2}+\frac{p(\rho)}{\rho}\right)m\cdot \nabla \varphi dx dt\\
&+ \int_{\Omega} \left(\rho_0\varepsilon(\rho_0)+\frac{1}{2}\frac{|m_0|^2}{\rho}\right)\varphi(\cdot,0) dx \geq 0\notag
\end{align}
for all $\varphi\in C_c^{\infty}(\Omega\times [0,T);\mathbb{R}^2)$.
\end{thm}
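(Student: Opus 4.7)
The strategy is to exploit the fact that the pointwise identity (\ref{pointwise constraint subsolutions compressible Euler}) together with the time-independence of $\rho=\rho_0$ makes the local energy density spatially homogeneous up to the (fixed) internal-energy part, so that the inequality (\ref{local energy inequality for incompressible Euler equations momentum}) reduces to a condition on the decay rate of $\chi$. The plan is therefore to prescribe $\chi\in C^{\infty}([0,T];\mathbb{R}^+)$ \emph{a priori} as the solution of a suitable ODE guaranteeing admissibility, and then to invoke Theorem~\ref{non-uniqueness for compressible Euler equations} with that particular profile.

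First I would carry out the structural reduction. Because $\rho(r,z,t)=r$ is time-independent, the continuity equation forces $\operatorname{div}(m)=0$ in $\mathcal{D}'$ for every weak solution produced by Theorem~\ref{non-uniqueness for compressible Euler equations}. With $\varepsilon(r)=\tfrac{r^{\gamma-1}}{\gamma-1}$ and $p(\rho)/\rho=r^{\gamma-1}$, the flux scalar
\begin{align*}
f(r,t):=\varepsilon(r)+\frac{1}{2}\frac{|m|^2}{\rho^2}+\frac{p(\rho)}{\rho}=\frac{\gamma}{\gamma-1}r^{\gamma-1}+\frac{\chi(t)}{2r}
\end{align*}
is $z$-independent and does not depend on the particular wild solution $m$, thanks to (\ref{pointwise constraint subsolutions compressible Euler}). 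Since $\rho\varepsilon(\rho)$ is time-independent and $\tfrac{1}{2}|m|^2/\rho=\chi(t)/2$, and since in the distributional sense $\operatorname{div}(fm)=(\partial_r f)m_r+f\operatorname{div}(m)=(\partial_r f)m_r$, the inequality (\ref{local energy inequality for incompressible Euler equations momentum}) is implied by the pointwise a.e.\ inequality
\begin{align*}
\frac{\chi'(t)}{2}+\Bigl(\gamma r^{\gamma-2}-\frac{\chi(t)}{2r^2}\Bigr)m_r(r,z,t)\le 0.
\end{align*}

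Next I would close the argument via an ODE choice. Using $|m_r|\le|m|=\sqrt{r\chi(t)}$ and $r\in[\delta,R]$,
\begin{align*}
\left|\Bigl(\gamma r^{\gamma-2}-\frac{\chi(t)}{2r^2}\Bigr)m_r\right|\le C(\delta,R,\gamma)\bigl(1+\chi(t)\bigr)\sqrt{\chi(t)}
\end{align*}
for an explicit constant $C$. Fixing $\chi_0>0$, let $\chi$ solve $\chi'(t)=-2C(1+\chi(t))\sqrt{\chi(t)}$ with $\chi(0)=\chi_0$; separation of variables (yielding $2\arctan\sqrt{\chi(t)}=2\arctan\sqrt{\chi_0}-2Ct$) shows that $\chi$ is smooth, positive and strictly decreasing on $[0,T^\ast)$ with $T^\ast=\arctan(\sqrt{\chi_0})/C$. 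For any $T<T^\ast$ the pointwise inequality above holds on $\Omega\times(0,T)$, so every weak solution delivered by Theorem~\ref{non-uniqueness for compressible Euler equations} with this $\chi$ automatically satisfies (\ref{local energy inequality for incompressible Euler equations momentum}). Since that theorem produces infinitely many solutions with the same $\chi$ and the same initial datum $m^0$ (the latter being determined by $\chi(0)$ and the fixed subsolution from Lemma~\ref{Konstruktion Sulsg}), the asserted family of admissible weak solutions is obtained.

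The main obstacle I anticipate is the \emph{a priori} prescription of $\chi$: one must verify that the subsolution construction of Lemma~\ref{Konstruktion Sulsg}, and hence the convex-integration scheme underlying Theorem~\ref{non-uniqueness for compressible Euler equations}, can be executed with the energy profile $\chi$ fixed in advance rather than chosen at the end. This is precisely how such schemes are organized, as $\chi$ is the input that determines the convex constraint $|m|^2=\rho_0\chi(t)$, but care is needed to check that the explicit $\chi$ solving the ODE above remains compatible with the quantitative smallness / positivity conditions imposed by Lemma~\ref{Konstruktion Sulsg}; shrinking $T$ further if necessary makes this possible.
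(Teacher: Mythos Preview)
Your approach is essentially identical to the paper's: reduce the local energy inequality via $|m|^2=\rho_0\chi(t)$ and $\operatorname{div}(m)=0$ to the pointwise condition $\tfrac{1}{2}\chi'(t)+m_r\bigl(\gamma r^{\gamma-2}-\tfrac{\chi(t)}{2r^2}\bigr)\le 0$, bound the second term using $|m_r|\le (r\chi)^{1/2}$ and $r\in[\delta,R]$, and then impose an ODE on $\chi$ (the paper writes the bound as $A\chi^{1/2}+B\chi^{3/2}$ with explicit constants rather than your lumped $C(1+\chi)\sqrt{\chi}$, which is equivalent). The one point you should sharpen is the compatibility step: the paper does not merely shrink $T$ but chooses $\chi(0)$ \emph{sufficiently large} so that the decreasing solution of the ODE still satisfies $\chi(t)>2\|e(\rho_0,\tilde m,\tilde U)\|_{L^\infty}$ on some interval $[0,T]$; your formulation ``fixing $\chi_0>0$'' and ``shrinking $T$'' should be replaced by this, since an arbitrary $\chi_0$ may already violate the constraint at $t=0$.
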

\begin{proof}[Proof of Lemma~\ref{equivalence of weak solutions for compressilbe Euler and axisymmetric Euler}]
We only have to show that if a weak solution of (\ref{compressible Euler equations}) satisfies the energy inequality it already satisfies the energy inequality for (\ref{Cauchy problem for the axisymmetric Euler equations}).
Now, let $v=(v_r,v_z)$ be a weak solution of (\ref{compressible Euler equations}) which fulfils the local energy inequality for (\ref{compressible Euler equations}) in $\Omega$. Let $\varphi\in C_c^{\infty}({\Omega}\times [0,T))$, $\varphi \geq 0$. Since density $\rho$ and pressure $p$ are given, we have $\varepsilon(r)=\frac{1}{\gamma -1}r^{\gamma -1}$. Then the local energy inequality for (\ref{compressible Euler equations}) is equivalent to
\begin{align*}
&\int_0^T\int_{{\Omega}} \left(r\frac{1}{\gamma -1}r^{\gamma -1} +\frac{1}{2}r|{v}|^2\right)\partial_t \varphi + \left(\frac{1}{2}|{v}|^2+\frac{\gamma}{\gamma -1}r^{\gamma-1}\right)r{v}\cdot \nabla \varphi ~dz dr dt\\
&+ \int_{{\Omega}} \left(r \frac{1}{\gamma -1}r^{\gamma -1} +\frac{1}{2} r|{v}^0|^2\right)\varphi(\cdot,0)~dz dr \geq 0
\end{align*}
for all $\varphi\in C_c^{\infty}(\Omega\times [0,T))$. Integration by parts leads to
\begin{align*}
\int_0^T&\int_{{\Omega}} \frac{1}{2}r|{v}|^2 \partial_t \varphi + \left(\frac{\gamma}{\gamma -1}r^{\gamma -1}+\frac{1}{2}|{v}|^2\right)r{v}\cdot \nabla \varphi ~dz dr dt\\
&+ \int_{{\Omega}} \frac{1}{2} r|{v}^0|^2\varphi(\cdot,0)~dz dr \geq 0,
\end{align*}
which is the local energy inequality (\ref{energy inequality axisymmetric swirl-free Euler eqations}) for the axisymmetric swirl-free Euler equations (\ref{Cauchy problem for the axisymmetric Euler equations}) with pressure $\pi(r)=\frac{\gamma}{\gamma -1}r^{\gamma-1}$.
\end{proof}
\begin{proof}[Proof of Theorem \ref{main result}]
Let $\rho_0$ be as in (\ref{Dichte}) and let $p(\rho_0)=\rho_0^{\gamma}$ for $\gamma >1$. Due to Theorem \ref{non-uniqueness for compressible Euler equations with energy inequality} there exist $T>0$ and $m^0\in L^{\infty}(\Omega,\mathbb{R}^2)$ and infinitely many admissible weak solutions $(\rho,m)\in C^1(\Omega)\times C([0,T],H_w(\Omega;\mathbb{R}^2))$ with $\rho=\rho_0$ fulfilling (\ref{Sublsg kompressible Euler}) and (\ref{pointwise constraint subsolutions compressible Euler}) which satisfy the local energy inequality~\eqref{local energy inequality for incompressible Euler equations momentum}. Since $m\in C([0,T],H_w(\Omega;\mathbb{R}^2))$ and $r\in (\delta,R)$, we conclude that $v=\frac{m}{r}\in C([0,T],L^2_w(\Omega;\mathbb{R}^2))$, $v_0=\frac{m^0}{r}$ solves~\eqref{compressible Euler equations} in $\Omega\times (0,T)$ in the distributional sense. By Theorem \ref{equivalence of weak solutions for compressilbe Euler and axisymmetric Euler} we know that $\tilde{v}=v_r e_r + v_z e_z$ is a weak solution of (\ref{incompressible 3D Euler equations}) which is axisymmetric and swirl-free with pressure $\pi(r)=\frac{\gamma}{\gamma -1}r^{\gamma-1}$.
Moreover, $\tilde v\in C([0,T];H_w(\tilde\Omega);\mathbb{R}^3)$, and also Theorem \ref{equivalence of weak solutions for compressilbe Euler and axisymmetric Euler} provides the equivalence of the energy inequalities locally in $\Omega_{\mathbb{R}}$. 

Now, as $\tilde v$ is solenoidal, we may extend it by $\tilde{v}_0=0$, $\tilde{v}=0$ and $\pi$ constant outside $\Omega_{\mathbb{R}}$, so that we end up with a weak solution $\tilde{v}\in L^{\infty}((0,T),L^2_{loc}(\mathbb{R}^3;\mathbb{R}^3))$ of (\ref{Cauchy problem for the axisymmetric Euler equations}).
\end{proof}
\section{Convex integration and suitable subsolutions for the isentropic compressible Euler equations}
In the seminal works \cite{convexintegration,convexintegration2}, De Lellis and Székelyhidi used Gromov's convex integration theory together with Tartar's framework of plane wave analysis to study the incompressible Euler equations as a differential inclusion. They also obtained the first examples of convex integration solutions for the isentropic Euler system. Chiodaroli~\cite{Chiodaroli} then refined the method for the isentropic compressible Euler equations~\eqref{compressible Euler equations in terms of the momentum} to obtain a larger set of `wild' initial data (i.e., initial data that gives rise to infinitely many admissible solutions).

In this section, we recall the main points of the convex integration scheme developed for weak solutions of~\eqref{compressible Euler equations in terms of the momentum} in the whole space and bounded domains~\cite{AkramovWiedemann}. In  the same spirit we construct weak solutions of~\eqref{compressible Euler equations in terms of the momentum} in $\Omega\times (0,T)$ which will be periodic in $z$ direction and fulfill a slip boundary condition. However, instead of working in Fourier space as in~\cite{Chiodaroli}, we rather adapt the techniques of \cite{AkramovWiedemann} to the case of weak solutions which are not compactly supported but periodic in the $z$ direction. We consider
\begin{align}\label{compressible Euler equations in terms of the momentum on the strip}
\begin{cases}
\partial_t \rho + \operatorname{div}(m)=0 \quad&\text{in $\Omega_{\mathbb{R}}\times [0,T]$}\\
\partial_t m + \operatorname{div}(\frac{m\otimes m}{\rho})+ \nabla p(\rho)=0 \quad&\text{in $\Omega_{\mathbb{R}}\times [0,T]$} \\
\rho(\cdot,0)=\rho^0\quad& \text{in $\Omega_{\mathbb{R}}$}\\
m(\cdot,0)=m^0 \quad&\text{in $\Omega_{\mathbb{R}}$}\\
m_r=0 \quad&\text{on $\{r=\delta\}\cup \{r=R\}$}.
\end{cases}
\end{align}
Let $S_0^2$ be the set of symmetric $2\times 2$ matrices with zero trace.
The upcoming lemma is a reformulation of a result from \cite{convexintegration,AkramovWiedemann} on bounded domains which relates the Euler equations to a differential inclusion, that is a linear system of partial differential equations and a nonlinear constraint expressed by (\ref{Sublsg system}) and (\ref{Sublsg Bedingung}) below:
\begin{lem}\label{subsolutions to weak solutions}
Let $m\in L^{\infty}((0,T),(L^{\infty}\cap H)(\Omega\times (0,T);\mathbb{R}^2))$, $U\in L^{\infty}(\Omega\times (0,T);S_0^2)$ and $q\in L^{\infty}(\Omega\times (0,T))$ such that
\begin{align}\label{Sublsg system}
\begin{cases}
\partial_t m +\operatorname{div}(U)+ \nabla_x q =0 \quad&\text{in $\Omega_{\mathbb{R}}\times [0,T]$}\\
\operatorname{div}(m)=0.&
\end{cases}
\end{align}
If $(m,U,q)$ solve (\ref{Sublsg system}) and there exists $\rho\in L^{\infty}(\Omega,\mathbb{R}_+)$ so that
\begin{align}\label{Sublsg Bedingung}
\begin{cases}
U=\frac{m\otimes m}{\rho}-\frac{|m|^2}{2\rho}I_2 &\text{a.e. in $\Omega_{\mathbb{R}}\times [0,T]$},\\
q=p(\rho)+\frac{|m|^2}{2\rho} &\text{a.e. in $\Omega_{\mathbb{R}}\times [0,T]$},
\end{cases}
\end{align}
then $m$ and $\rho$ solve (\ref{compressible Euler equations in terms of the momentum on the strip} ) distributionally in $\Omega_{\mathbb{R}}\times [0,T]$.

Conversely, if $(m,\rho)$ is a weak solution of (\ref{compressible Euler equations in terms of the momentum on the strip}) in $\Omega_{\mathbb{R}}\times [0,T]$ then $m$, $U=\frac{m\otimes m}{\rho}-\frac{|m|^2}{2\rho}I_2$, and $q=p(\rho)+\frac{|m|^2}{2\rho}$ satisfy (\ref{Sublsg system}) and (\ref{Sublsg Bedingung}).
\end{lem}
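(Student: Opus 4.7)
The plan is to exploit the elementary algebraic identity
\begin{equation*}
\frac{m\otimes m}{\rho} + p(\rho)I_2 = U + qI_2,
\end{equation*}
which holds precisely when the pointwise constraint (\ref{Sublsg Bedingung}) is satisfied. This identity converts the nonlinear momentum equation in (\ref{compressible Euler equations in terms of the momentum on the strip}) into the linear system (\ref{Sublsg system}), and vice versa. A second crucial observation is that $\rho$ is a time-independent element of $L^{\infty}(\Omega;\mathbb{R}_+)$, so the continuity equation $\partial_t\rho+\operatorname{div}(m)=0$ collapses to the incompressibility condition $\operatorname{div}(m)=0$, exactly the second equation of (\ref{Sublsg system}).

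For the forward direction, I would take the distributional divergence of the first identity in (\ref{Sublsg Bedingung}) to obtain
\begin{equation*}
\operatorname{div}\left(\frac{m\otimes m}{\rho}\right) = \operatorname{div}(U) + \nabla\left(\frac{|m|^2}{2\rho}\right),
\end{equation*}
and take the gradient of the second identity to obtain $\nabla q = \nabla p(\rho) + \nabla\left(\frac{|m|^2}{2\rho}\right)$. Subtracting these and substituting into the first line of (\ref{Sublsg system}), the $\nabla\left(\frac{|m|^2}{2\rho}\right)$ contributions cancel and one is left with
\begin{equation*}
\partial_t m + \operatorname{div}\left(\frac{m\otimes m}{\rho}\right) + \nabla p(\rho) = 0,
\end{equation*}
which is the momentum equation distributionally on $\Omega_{\mathbb{R}}\times[0,T]$. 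The slip condition $m_r=0$ on $\{r=\delta\}\cup\{r=R\}$ and the periodicity in $z$ are built into the hypothesis $m\in H(\Omega;\mathbb{R}^2)$ by the trace theorem cited right after (\ref{solenoidal vector fields}).

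For the converse direction, I would simply define $U$ and $q$ by the right-hand sides of (\ref{Sublsg Bedingung}) given a weak solution $(m,\rho)$ of (\ref{compressible Euler equations in terms of the momentum on the strip}). A direct check shows that $U$ is symmetric and
\begin{equation*}
\operatorname{tr}(U) = \frac{|m|^2}{\rho} - \frac{|m|^2}{\rho} = 0,
\end{equation*}
so $U\in S_0^2$ almost everywhere. Reading the chain of equalities above in reverse then yields $\partial_t m + \operatorname{div}(U) + \nabla q = 0$ distributionally, while $\operatorname{div}(m)=0$ follows from the weak continuity equation together with $\partial_t\rho=0$.

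There is no real obstacle in this lemma: it is the by-now-standard algebraic reformulation of a compressible Euler system as a linear PDE coupled to a pointwise nonlinear constraint, of exactly the type introduced by De~Lellis and Sz\'ekelyhidi and adapted by Chiodaroli and by Akramov-Wiedemann. The only minor points of care are the interpretation of the slip boundary condition in the trace sense (for which $H(\Omega;\mathbb{R}^2)$ is precisely the right function space) and the exploitation of the stationarity of $\rho$, which is essential both for reducing the continuity equation to incompressibility and for justifying that $\rho$ may be pulled in and out of spatial derivatives without generating time-derivative remainder terms.
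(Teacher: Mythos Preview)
Your argument is correct and is precisely the standard algebraic verification: the identity $U+qI_2=\frac{m\otimes m}{\rho}+p(\rho)I_2$ together with the time-independence of $\rho$ converts the linear system~(\ref{Sublsg system}) into the compressible Euler system~(\ref{compressible Euler equations in terms of the momentum on the strip}) and back. The paper does not supply its own proof of this lemma; it merely cites it as a reformulation of results from~\cite{convexintegration,AkramovWiedemann}, so your write-up is in fact more detailed than what appears here.
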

In this section we will seek weak solutions with the specific density given by~\eqref{Dichte} and momentum in the space of solenoidal vector fields $H(\Omega;\mathbb{R}^2)$. 

Following \cite{convexintegration,convexintegration2} the differential inclusion~\eqref{Sublsg system} and~\eqref{Sublsg Bedingung} can be analyzed by introducing the wave cone $\Lambda$, which is the set of states determining plane wave solutions of~\eqref{Sublsg system}, and a closed constraint set $K$ which incorporates the nonlinear constraint~\eqref{Sublsg Bedingung}. For this we consider the symmetric matrix
\begin{align*}
M=\begin{pmatrix}
U+qI_2 & m\\
m & 0
\end{pmatrix}
\end{align*}
and define the wave cone $\Lambda$ to be
\begin{align*}
\Lambda\coloneqq \{(m,U,q)|~\exists \xi \in \mathbb{R}^3\setminus \{0\}: (m,U,q)h(x\cdot\xi) \text{ solves } (\ref{Sublsg system}) \text{ for every } h\colon \mathbb{R}\to \mathbb{R}\}.
\end{align*}
As a very brief computation shows, the wave cone can equivalently be characterized by
\begin{align*}
\Lambda= \{(m,U,q)\in \mathbb{R}^2\times S_0^2\times \mathbb{R}:~\det(M)=0\}.
\end{align*}

As for the nonlinear constraint~\eqref{Sublsg Bedingung} we consider
\begin{align*}
K_{\rho}\coloneqq\left\{(m,U,q)\in \mathbb{R}^2\times S_0^2\times \mathbb{R}^+:~U=\frac{m\otimes m}{\rho}-\frac{|m|^2}{2\rho}I_2,~q=p(\rho)+\frac{|m|^2}{2\rho}\right\}
\end{align*}
for $\rho\in (0,\infty)$. For such $\rho$ and for $\chi\in \mathbb{R}^+$, we set
\begin{align*}
K_{\rho,\chi}\coloneqq K_{\rho}\cap\{|m|^2=\rho\chi\}.
\end{align*}
Now, in the following we recall important characterizations for $K_{\rho,\chi}$ from \cite{Chiodaroli}, based on~\cite{convexintegration2}, and introduce a natural energy profile for~\eqref{Sublsg system}:
\begin{lem}\label{properties of the energy}
For $(\rho,m,U)\in \mathbb{R}^+\times \mathbb{R}^2\times S_0^2$ let
\begin{align*}
e(\rho,m,U)\coloneqq \lambda_{\max}\left(\frac{m\otimes m}{\rho}-U\right),
\end{align*}
where $\lambda_{\max}(A)$ denotes the largest eigenvalue of $A\in S_0^2$.
Then we have
\begin{enumerate}[(i)]
\item $e(\rho,\cdot,\cdot)\colon \mathbb{R}^2\times S_0^2\to \mathbb{R}$ is convex,
\item $\frac{|m|^2}{2\rho}\leq e(\rho,m,U)$ with equality if and only if $U=\frac{m\otimes m}{\rho}-\frac{|m|^2}{2\rho}$,
\item $\|U\|\leq e(\rho,m,U)$, where $\|\cdot\|$ denotes the operator norm,
\item $K_{\rho,\chi}^{co}=\{(m,U,q)\in \mathbb{R}^2\times S_0^2\times \mathbb{R}^+:~e(\rho,m,U)\leq \frac{\chi}{n},\quad q=p(\rho)+\frac{\chi}{n}\}$,
\item $K_{\rho,\chi}=K_{\rho,\chi}^{co}\cap \{|m|^2=\rho\chi\}$.
\end{enumerate}
\end{lem}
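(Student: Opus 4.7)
The plan is to treat the five claims in order, with (i)--(iii) following from elementary properties of $\lambda_{\max}$ on $2\times 2$ symmetric matrices and (iv)--(v) carrying the genuine convex-geometric content inherited from the compressible convex-integration framework of \cite{convexintegration2, Chiodaroli, AkramovWiedemann}.

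For (i), I will use the variational characterisation $\lambda_{\max}(A)=\sup_{|\xi|=1}\xi^{\top}A\xi$ to rewrite
\[
e(\rho,m,U)=\sup_{|\xi|=1}\Bigl(\tfrac{(m\cdot\xi)^{2}}{\rho}-\xi^{\top}U\xi\Bigr),
\]
and observe that the integrand is convex in $(m,U)$ for every fixed $\xi$ (quadratic in $m$, affine in $U$), so the pointwise supremum is convex. For (ii), tracelessness of $U$ gives $\operatorname{tr}(\tfrac{m\otimes m}{\rho}-U)=|m|^{2}/\rho$; combined with the general bound $\lambda_{\max}(A)\geq\tfrac{1}{2}\operatorname{tr}(A)$ for symmetric $2\times 2$ matrices (equality iff $A$ is a scalar multiple of $I_{2}$), this yields both the inequality and the equality case $U=\tfrac{m\otimes m}{\rho}-\tfrac{|m|^{2}}{2\rho}I_{2}$. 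For (iii), since $\tfrac{m\otimes m}{\rho}$ is positive semidefinite, Rayleigh monotonicity gives $\lambda_{\max}(\tfrac{m\otimes m}{\rho}-U)\geq\lambda_{\max}(-U)$; because $U$ is symmetric and traceless its eigenvalues are $\pm\|U\|$, so $\lambda_{\max}(-U)=\|U\|$.

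Part (iv) carries the real weight. Writing $C$ for the right-hand side, the easy inclusion $K_{\rho,\chi}^{co}\subseteq C$ follows from $K_{\rho,\chi}\subseteq C$ (by (ii) with equality, so $e=\chi/2$ on $K_{\rho,\chi}$) together with convexity of $C$ from (i). The reverse inclusion $C\subseteq K_{\rho,\chi}^{co}$ is the technical core. My plan is to adapt the matrix-decomposition argument from \cite{Chiodaroli, AkramovWiedemann}: setting $A=\tfrac{m\otimes m}{\rho}-U$, the hypothesis $e\leq\chi/2$ together with $\operatorname{tr}(A)=|m|^{2}/\rho$ forces both eigenvalues of $\tfrac{\chi}{2}I_{2}-A$ to be non-negative (the upper via $e\leq\chi/2$, the lower via $|m|^{2}\leq\rho\chi$, which itself follows from (ii)). Spectrally decomposing the resulting PSD matrix as a sum of two rank-one pieces and reinterpreting each piece as a $\Lambda$-segment whose endpoints lie in $K_{\rho,\chi}$, an iterated binary convex combination then displays $(m,U,q)$ as an element of $K_{\rho,\chi}^{co}$.

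Part (v) is an immediate consequence of (iv) and (ii): the inclusion $K_{\rho,\chi}\subseteq K_{\rho,\chi}^{co}\cap\{|m|^{2}=\rho\chi\}$ is definitional; conversely, if $(m,U,q)\in K_{\rho,\chi}^{co}$ with $|m|^{2}=\rho\chi$, then (iv) gives $e(\rho,m,U)\leq\chi/2$ while (ii) gives $e(\rho,m,U)\geq|m|^{2}/(2\rho)=\chi/2$, forcing equality in (ii), which pins down $U=\tfrac{m\otimes m}{\rho}-\tfrac{\chi}{2}I_{2}$ and hence $(m,U,q)\in K_{\rho,\chi}$. The main obstacle I expect is the careful bookkeeping in (iv), particularly at boundary configurations where one of the rank-one summands in the spectral decomposition degenerates; in that regime I plan either to invoke the equality case of (ii) directly or to approximate by interior points and pass to the limit.
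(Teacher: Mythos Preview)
The paper does not actually prove this lemma: it is stated as a recollection from \cite{Chiodaroli} (based on \cite{convexintegration2}) with no argument given in the main text or the appendix. Your proposal is therefore not competing against a proof in the present paper but rather reconstructing the standard argument from those references, and in that respect it is on target: (i)--(iii) and (v) are correct exactly as written, and your strategy for (iv) --- easy inclusion via convexity of $e$, reverse inclusion via positive semidefiniteness of $\tfrac{\chi}{2}I_{2}-A$ followed by a decomposition --- is the right shape.

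Two small points on (iv). First, a bookkeeping slip: the nonnegativity of \emph{both} eigenvalues of $\tfrac{\chi}{2}I_{2}-A$ follows directly from $\lambda_{\max}(A)\le\tfrac{\chi}{2}$; your separate appeal to $|m|^{2}\le\rho\chi$ for one of the eigenvalues is redundant and the upper/lower labelling is reversed. Second, the step ``spectrally decompose $\tfrac{\chi}{2}I_{2}-A$ into rank-one pieces and reinterpret each as a $\Lambda$-segment with endpoints in $K_{\rho,\chi}$'' is not quite a proof as stated: the eigenvectors of $\tfrac{\chi}{2}I_{2}-A$ do not by themselves furnish the vectors $m_{i}$ with $|m_{i}|^{2}=\rho\chi$ that you need. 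The argument in \cite{convexintegration2} (their Lemma~3) instead runs via Krein--Milman: one shows that any $(m,U,q)\in C\setminus K_{\rho,\chi}$ lies on a nontrivial segment inside $C$ (built from an eigenvector for the \emph{smallest} eigenvalue of $A$), hence is not extreme, so $\operatorname{ext}(C)\subseteq K_{\rho,\chi}$ and compactness plus convexity finish. An equivalent direct route is to read the reverse inclusion as a truncated moment problem on the circle $\{|w|^{2}=\rho\chi\}$, where PSD-ness of $\tfrac{\chi}{2}I_{2}-A$ is exactly the covariance constraint guaranteeing a finitely supported representing measure. Either replacement makes your sketch for (iv) rigorous.
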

Here, $K^{co}_{\rho,\chi}$ denotes the convex hull of $K_{\rho,\chi}$. Chiodaroli \cite{Chiodaroli} pointed out that in contrast to the incompressible case \cite{convexintegration}, the interior of the set $K_{\rho,\chi}^{\text{co}}$ encoding the nonlinear constraint is empty, as it is a subset of the hyperplane $H=\{(m,U,q)\in \mathbb{R}^2\times S_0^2\times \mathbb{R}^+:~q=p(\rho)+\frac{\chi}{n}\}$. For this reason, Chiodaroli considers the {\em hyperinterior}:
For $\rho,\chi\in \mathbb{R}^+$ we define
\begin{align*}
\text{hint }K_{\rho,\chi}^{co}\coloneqq\left\{(m,U,q)\in \mathbb{R}^2\times S_0^2\times \mathbb{R}^+:~e(\rho,m,U)< \frac{\chi}{n}, q=p(\rho)+\frac{\chi}{n}\right\}.
\end{align*}
It was concluded that if the hyperinterior is nonempty, then the convex hull is large in some sense~\cite[Section3]{Chiodaroli}, which is necessary in order to study~\eqref{compressible Euler equations in terms of the momentum on the strip} by Tartar's plane wave analysis.

Now, we give the definition of subsolutions for~\eqref{compressible Euler equations in terms of the momentum on the strip} in $\Omega\times (0,T)$, for the specific density chosen in~\eqref{Dichte}:
\begin{Defi}[Subsolutions]\label{definition subsolutions}
Let $\rho_0\colon \Omega_{\mathbb{R}}\to \mathbb{R}$, $\rho_0(r,z)=r$ and $p(\rho_0)\colon \Omega_{\mathbb{R}}\to \mathbb{R}$, $p(\rho)=\rho^{\gamma}$ for $\gamma >1$. Let $(m_0,U_0,q_0)\colon \Omega_{\mathbb{R}}\times (0,T)\to \mathbb{R}^2\times S_0^2\times \mathbb{R}$ with $m_0\in C([0,T],H_w(\Omega;\mathbb{R}^2))$, $U_0\in C(\Omega_{\mathbb{R}}\times (0,T);S_0^2)$ and 
\begin{align*}
q_0(r,z,t)=p(\rho_0(r,z,t))+\frac{\chi(t)}{2} \text{ for all } (r,z,t)\in \Omega_{\mathbb{R}}\times (0,T)
\end{align*}
for given $\chi\in C^{\infty}([0,T],\mathbb{R}^+)$ such that
\begin{align*}
\begin{cases}
\partial_t m_0+\operatorname{div}(U_0)+\nabla q_0=0 &\text{ in } \Omega_{\mathbb{R}}\times (0,T)\\
\operatorname{div}(m_0)=0&\text{ in } \Omega_{\mathbb{R}}\times (0,T)\\
(m_0)_r=0 &\text{ on } \{r=\delta\}\cup \{r=R\}
\end{cases}
\end{align*}
and
\begin{align*}
e(\rho_0(r,z),m_0(r,z,t),U_0(r,z,t))<\frac{\chi(t)}{2}
\end{align*}
for all $(r,z,t)\in \Omega_{\mathbb{R}}\times [0,T]$.

A {\em subsolution} to (\ref{compressible Euler equations in terms of the momentum on the strip}) with respect to $\chi$ and $\rho_0$ is a continuous triple $(m,U,q)\colon \Omega_{\mathbb{R}}\times (0,T)\to \mathbb{R}^2\times S_0^2\times \mathbb{R}$ so that
\begin{align}\label{Sublsg erste Bedingung}
\begin{cases}
\partial_t m+\operatorname{div}(U)+\nabla q=0 &\text{ in }\Omega_{\mathbb{R}}\times (0,T)\\
\operatorname{div}(m)=0 &\text{ in }\Omega_{\mathbb{R}}\times (0,T)\\
m_r=0 &\text{ on } \{r=\delta\}\cup \{r=R\}
\end{cases}
\end{align}
and
\begin{align}\label{Sublsg zweite Bedingung}
\begin{cases}
q=q_0&\text{ for all }(r,z,t)\in \Omega_{\mathbb{R}}\times (0,T)\\
e(\rho_0(r,z),m(r,z,t),U(r,z,t))<\frac{\chi(t)}{2} &\text{ for all }(r,z,t)\in \Omega_{\mathbb{R}}\times (0,T)\\
m(r,z,0)=m_0(r,z,0)&\text{ for all }(r,z)\in \Omega_{\mathbb{R}}\\
m(r,z,T)=m_0(r,z,T)&\text{ for all }(r,z)\in \Omega_{\mathbb{R}}.
\end{cases}
\end{align}
\end{Defi}
In this context we define $X_0$ to be the set of all admissible momentum fields 
\begin{align*}
X_0\coloneqq \{m\in C((0,T),C(\Omega))\cap C([0,T],H_w(\Omega)): m\text{ fulfils } (\ref{Sublsg erste Bedingung}), (\ref{Sublsg zweite Bedingung}) \text{ for some } U\}.
\end{align*}
Note that for any $m\in X_0$, by Lemma~\ref{properties of the energy}(ii) and~\eqref{Sublsg zweite Bedingung} we have
\begin{align*}
|m|^2\leq  2 \rho_0 e(\rho_0,m,U)\leq  \chi \rho_0\quad\text{a.e.\ in $\Omega\times (0,T)$.}
\end{align*}
Due to the periodicity, this implies $|m|^2\leq \chi\rho_0$ a.e.\ in $\Omega_{\mathbb{R}}\times (0,T)$.
Now, let $X$ be the closure of $X_0$ with respect to the $C([0,T],H_w(\Omega;\mathbb{R}^2))$-norm.
The next lemma relates subsolutions to weak solutions which are periodic in $z$ direction and satisfy a slip boundary condition on $\{r=\delta\}\cup \{r=R\}$, in the trace sense discussed above. 
\begin{lem}[Subsolution criterion]\label{Subsolution criterion}
Let $\rho_0$ and $p(\rho_0)$ be given as in Definition \ref{definition subsolutions}. Assume there exist $(m_0,U_0,q_0)\colon \Omega_{\mathbb{R}}\times (0,T)\to \mathbb{R}^2\times S_0^2\times \mathbb{R}$ and $\chi\in C^{\infty}([0,T],\mathbb{R}^+)$ so that $m_0\in C([0,T],H_w(\Omega;\mathbb{R}^2))$, $U_0\in C(\Omega_{\mathbb{R}}\times (0,T);S_0^2)$ and 
\begin{align*}
q_0(r,z,t)=p(\rho_0(r,z,t))+\frac{\chi(t)}{2} \text{ for all } (r,z,t)\in \Omega\times (0,T)
\end{align*}
for given $\chi\in C^{\infty}([0,T],\mathbb{R}^+)$ fulfilling
\begin{align*}
\begin{cases}
\partial_t m_0+\operatorname{div}(U_0)+\nabla q_0=0 &\text{ in } \Omega_{\mathbb{R}}\times (0,T)\\
\operatorname{div}(m_0)=0&\text{ in } \Omega_{\mathbb{R}}\times (0,T)\\
(m_0)_r=0 &\text{ on } \{r=\delta\}\cup \{r=R\}\\
e(\rho_0(r,z),m_0(r,z,t),U_0(r,z,t))<\frac{\chi(t)}{2} &\text{ for all } (r,z,t)\in \Omega_{\mathbb{R}}\times (0,T).
\end{cases}
\end{align*}
Then there exist infinitely many weak solutions $(\rho,m)$ of (\ref{compressible Euler equations in terms of the momentum on the strip} ) in $\Omega_{\mathbb{R}}\times [0,T)$ with density $\rho(r,z)=\rho_0(r,z)=r$ so that
\begin{align*}
\begin{cases}
m\in C([0,T],H_w(\Omega;\mathbb{R}^2))\\
m(r,z,t)=m_0(r,z,t) &\text{ for } t=0,T \text{ and for a.e. } (r,z)\in \Omega_{\mathbb{R}}\\
|m(r,z,t)|^2=\rho_0(r,z)\chi(t)&\text{ for a.e. } (r,z,t)\in \Omega_{\mathbb{R}}\times (0,T)\\
m_r=0 &\text{ on } \{r=\delta\}\cup \{r=R\}.
\end{cases}
\end{align*}
\end{lem}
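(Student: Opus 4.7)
The plan is to run the Baire category convex integration scheme of De Lellis--Sz\'ekelyhidi on the complete metric space $(X,d)$, in the compressible form developed by Chiodaroli and adapted to noncompact domains by Akramov--Wiedemann. Since every element of $X_0$ satisfies the pointwise bound $|m|^2\le \chi\rho_0$, the set $X_0$ is bounded in $L^\infty(0,T;L^2(\Omega;\mathbb{R}^2))$; hence $X$ lies in a bounded, metrizable subset of $C([0,T],H_w(\Omega;\mathbb{R}^2))$, and equipped with the induced metric $d$ it becomes a nonempty (as $m_0\in X_0$) complete metric space.

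On $X$ I would work with the defect functional
\begin{align*}
I(m)\coloneqq \int_0^T\!\!\int_\Omega \bigl(\chi(t)\rho_0(r,z)-|m(r,z,t)|^2\bigr)\,dz\,dr\,dt,
\end{align*}
which is nonnegative by the pointwise bound and vanishes precisely when $|m|^2=\chi\rho_0$ almost everywhere. Weak lower semicontinuity of the $L^2$ norm makes $I$ upper semicontinuous on $X$, so by a classical Baire category argument its set $\mathcal C\subset X$ of continuity points is a dense $G_\delta$, in particular uncountable. For any $m\in\mathcal C$ with $I(m)=0$ one has $|m|^2=\chi\rho_0$ and hence, by Lemma~\ref{properties of the energy}(ii) and~(v), the associated matrix $U=\frac{m\otimes m}{\rho_0}-\frac{|m|^2}{2\rho_0}I_2$ together with $q=q_0$ satisfies~\eqref{Sublsg Bedingung}; Lemma~\ref{subsolutions to weak solutions} then promotes $(m,\rho_0)$ to a weak solution of~\eqref{compressible Euler equations in terms of the momentum on the strip}. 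The slip and periodic boundary conditions, the prescribed values at $t=0,T$, and the $L^\infty$ bound are all inherited from $X_0$ along weakly convergent sequences, delivering every property required in the statement.

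The principal obstacle is the \emph{perturbation property}: there exists $c>0$ such that for every $m\in X_0$ with $I(m)>0$ one can find $m_k\in X_0$ with $m_k\to m$ in $d$ and $\limsup_k I(m_k)\le I(m)-c\,I(m)^2$. Granted this, density of $X_0$ in $X$ and a standard diagonal argument force $I\equiv 0$ on $\mathcal C$. To establish the perturbation property, one exploits that for every $m\in X_0$ the gap $\chi(t)/2-e(\rho_0,m,U)$ is strictly positive pointwise on $\Omega_\mathbb{R}\times(0,T)$. Lemma~\ref{properties of the energy}(iv)--(v) together with the nonemptiness of $\text{hint}\,K^{co}_{\rho_0,\chi}$ supplies perturbation directions in the wave cone $\Lambda$ whose amplitude is controlled from below by the gap. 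Localizing highly oscillatory plane waves in these directions by cutoffs supported in small space--time cubes compactly contained in $\Omega_\mathbb{R}\times(0,T)$, and correcting divergence errors by the potential-theoretic construction of~\cite{AkramovWiedemann}, produces perturbations $(\tilde m,\tilde U,0)$ that preserve~\eqref{Sublsg erste Bedingung} and the strict inequality in~\eqref{Sublsg zweite Bedingung}. Because these perturbations are compactly supported inside $\Omega_\mathbb{R}\times(0,T)$, the lateral slip condition on $\{r=\delta\}\cup\{r=R\}$, the $z$-periodicity, and the boundary values at $t=0,T$ are automatically retained. Summing many disjointly supported perturbations and invoking Jensen's inequality in the manner of~\cite{Chiodaroli} yields the quadratic lower bound $I(m)-I(m+\tilde m)\ge c\,I(m)^2$, which is precisely the missing ingredient.
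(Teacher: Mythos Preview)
Your proposal is correct and follows essentially the same Baire category scheme as the paper's proof, which packages the ingredients as Lemmas~\ref{weak solution condition}, \ref{bairelem}, and~\ref{perturbation property} (the weak solution condition, the Baire-1 property of the identity map $(X,d)\to L^2$, and Chiodaroli's perturbation property) and then runs the same diagonal argument. The only cosmetic difference is that you phrase the Baire step via upper semicontinuity of the defect functional $I$, whereas the paper uses the equivalent formulation that the identity map is Baire-1; your sketch of the perturbation property is also in line with the paper, which simply cites \cite[Lemma~4.5]{Chiodaroli} (note, however, that in the De Lellis--Sz\'ekelyhidi potential construction the localized oscillations already solve the linear system exactly, so no separate ``divergence correction'' is needed).
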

In order to prove Lemma \ref{Subsolution criterion}, three important tools are presented. Their proofs are almost identical to their counterparts in~\cite{convexintegration2, Chiodaroli}, but we present them in the appendix for the readers' convenience.

 Let us now start with the first ingredient which is a sufficient condition for  admissible momentum fields $m\in X$ to be weak solutions of~\eqref{compressible Euler equations in terms of the momentum on the strip}. 
\begin{lem}\label{weak solution condition}
If $m\in X$ fulfils $|m(r,z,t)|^2=\rho_0(r,z)\chi(t)$ for a.e. $(r,z,t)\in \Omega_{\mathbb{R}}\times (0,T)$ then the pair $(\rho_0,m)$ is a weak solution of~\eqref{compressible Euler equations in terms of the momentum on the strip} in $\Omega\times (0,T)$.
\end{lem}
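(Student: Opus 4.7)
The plan is to extract from the defining sequence for $m\in X$ an auxiliary field $U$ so that the triple $(m,U,q)$ with $q:=p(\rho_0)+\chi/2$ satisfies the linear system~\eqref{Sublsg system} together with the nonlinear constraint~\eqref{Sublsg Bedingung}; Lemma~\ref{subsolutions to weak solutions} will then immediately yield that $(\rho_0,m)$ is a weak solution of~\eqref{compressible Euler equations in terms of the momentum on the strip}.

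Since $m\in X$, there is a sequence $m_n\in X_0$ with $m_n\to m$ in $C([0,T];H_w(\Omega;\mathbb{R}^2))$ and, for each $n$, an $U_n$ satisfying~\eqref{Sublsg erste Bedingung}--\eqref{Sublsg zweite Bedingung} with the common $q_n\equiv q_0=p(\rho_0)+\chi/2$. The strict energy bound $e(\rho_0,m_n,U_n)<\chi/2$, together with Lemma~\ref{properties of the energy}(iii), yields a uniform $L^{\infty}$ bound on $U_n$, so along a subsequence $U_n\rightharpoonup^* U$ weakly-$\ast$ in $L^{\infty}(\Omega_{\mathbb{R}}\times(0,T);S_0^2)$. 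Passing to the limit in the linear system gives $\partial_t m+\operatorname{div}(U)+\nabla q=0$ distributionally, while $\operatorname{div}(m)=0$, the slip boundary condition, and the correct initial datum $m(\cdot,0)$ are inherited directly from $m\in C([0,T];H_w(\Omega))$.

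The decisive step is to identify $U=\frac{m\otimes m}{\rho_0}-\frac{|m|^2}{2\rho_0}I_2$ almost everywhere. By the convexity of $e(\rho_0,\cdot,\cdot)$ (Lemma~\ref{properties of the energy}(i)), the functional $(m,U)\mapsto\int\phi\,e(\rho_0,m,U)$ is weakly lower semicontinuous for every nonnegative test function $\phi$; passing to the limit in $\int\phi\,e(\rho_0,m_n,U_n)\,dxdt<\int\phi\,\chi/2\,dxdt$ therefore yields $e(\rho_0,m,U)\leq\chi/2$ almost everywhere. Combining this with the hypothesis $|m|^2=\rho_0\chi$ and with Lemma~\ref{properties of the energy}(ii), we obtain the chain $\chi/2=|m|^2/(2\rho_0)\leq e(\rho_0,m,U)\leq\chi/2$, forcing equality throughout. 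The equality case of Lemma~\ref{properties of the energy}(ii) then pins down $U=\frac{m\otimes m}{\rho_0}-\frac{|m|^2}{2\rho_0}I_2$ a.e. Since moreover $q=p(\rho_0)+\chi/2=p(\rho_0)+|m|^2/(2\rho_0)$, both parts of the nonlinear constraint~\eqref{Sublsg Bedingung} are satisfied, and Lemma~\ref{subsolutions to weak solutions} concludes the proof.

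The main obstacle is precisely this identification step: the weak-$\ast$ limit $U$ is a priori unrelated to the quadratic expression $\frac{m\otimes m}{\rho_0}$, since the map $m\mapsto m\otimes m$ is not continuous in the weak topology. The obstruction is overcome by the rigidity at the boundary of $K_{\rho_0,\chi}$: the pointwise saturation $|m|^2=\rho_0\chi$, together with the convex energy bound $e(\rho_0,m,U)\leq\chi/2$, forces $(m,U)$ into the equality case of Lemma~\ref{properties of the energy}(ii), which rigidly determines $U$ from $m$.
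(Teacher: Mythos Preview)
Your proof is correct and follows essentially the same route as the paper's argument: extract a weak-$\ast$ limit $U$ of the matrix fields $U_n$ associated with an approximating sequence $m_n\in X_0$, pass to the limit in the linear system, then use the hypothesis $|m|^2=\rho_0\chi$ together with the convexity/energy structure of $K_{\rho_0,\chi}$ to force $(m,U,q_0)\in K_{\rho_0,\chi}$ almost everywhere, and conclude via Lemma~\ref{subsolutions to weak solutions}. The only cosmetic difference is that the paper phrases the key step as ``weak limits of functions taking values in the closed convex set $K_{\rho_0,\chi}^{co}$ remain in that set, and Lemma~\ref{properties of the energy}(v) then identifies the boundary'', whereas you argue via weak lower semicontinuity of $\int\phi\,e(\rho_0,\cdot,\cdot)$ and the equality case of Lemma~\ref{properties of the energy}(ii); these are two formulations of the same rigidity.
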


The second ingredient is a characterization of the closure $X$ of admissible subsolutions $X_0$ in terms of Baire theory.
\begin{lem}\label{bairelem}
The identity map $I\colon (X,d)\to L^2([0,T],H(\Omega))$ defined by $m\mapsto m$ is a Baire-1 map and therefore the set of points of continuity is residual in $(X,d)$.
\end{lem}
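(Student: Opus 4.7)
The plan is to exhibit $I$ as a pointwise limit of a sequence of maps which are continuous from $(X,d)$ into $L^2([0,T],H(\Omega))$, and then to invoke the standard Baire category theorem on Baire-$1$ maps between metric spaces. The main technical point is to choose the approximating sequence so that it converts weak convergence in the source into strong convergence in the target.

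First I would record that $(X,d)$ is a complete metric space. By Lemma~\ref{properties of the energy}(ii) and the pointwise constraint $e(\rho_0,m,U)<\chi/2$ in the definition of subsolutions, every $m\in X_0$ satisfies $|m|^2\leq\chi\rho_0$ a.e., and since $\rho_0(r)=r\leq R$ on $\Omega$ and $\chi$ is continuous on $[0,T]$, this yields a uniform bound $\|m(\cdot,t)\|_{L^2(\Omega)}\leq C$ independent of $m\in X_0$ and $t\in[0,T]$. The closed ball $B_C\subset H(\Omega)$ of radius $C$ is metrizable in the weak topology (since $H(\Omega)$ is separable Hilbert), and $(C([0,T],(B_C,d_w)),d_{sup})$ is a complete metric space; hence so is its closed subset $X$.

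Next I would construct the approximations via a finite-rank projection. Pick an orthonormal basis $\{e_n\}_{n\geq 1}$ of $H(\Omega)$ and let $P_k$ denote the orthogonal projection onto $\mathrm{span}(e_1,\ldots,e_k)$. Define
\begin{equation*}
I_k\colon (X,d)\longrightarrow L^2([0,T],H(\Omega)),\qquad I_k(m)(t)\coloneqq P_k(m(\cdot,t)).
\end{equation*}
Since $P_k$ is continuous from the weak to the strong topology of $H(\Omega)$ (finite-rank operators transform weak convergence into strong convergence), the map $t\mapsto P_k(m(\cdot,t))$ is continuous as a map $[0,T]\to H(\Omega)$ for each fixed $m\in X$. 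Moreover, if $m_n\to m$ in $(X,d)$, then $m_n(\cdot,t)\rightharpoonup m(\cdot,t)$ in $H(\Omega)$ uniformly in $t$, and in particular $P_k(m_n(\cdot,t))\to P_k(m(\cdot,t))$ strongly in $H(\Omega)$ for every $t$. The uniform bound $\|P_k(m_n(\cdot,t))\|_{H(\Omega)}\leq C$ together with dominated convergence gives $I_k(m_n)\to I_k(m)$ in $L^2([0,T],H(\Omega))$, establishing continuity of each $I_k$.

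Pointwise convergence $I_k(m)\to I(m)$ in $L^2([0,T],H(\Omega))$ for every $m\in X$ follows immediately: since $\{e_n\}$ is an orthonormal basis of $H(\Omega)$, $\|P_k(m(\cdot,t))-m(\cdot,t)\|_{H(\Omega)}\to 0$ for every $t\in[0,T]$, and since $\|P_k(m(\cdot,t))-m(\cdot,t)\|_{H(\Omega)}\leq 2C$, dominated convergence yields the claim. Therefore $I$ is the pointwise limit of continuous maps $I_k$ between metric spaces, i.e., $I$ is a Baire-$1$ map. By the classical theorem of Baire on real-valued Baire-$1$ functions applied coordinatewise in a countable dense family of bounded linear functionals on the separable target (or equivalently by its direct extension to Baire-$1$ maps into a separable metric space; see, e.g., \cite{convexintegration2}), the set of points of continuity of $I\colon(X,d)\to L^2([0,T],H(\Omega))$ is residual in the complete metric space $(X,d)$. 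The only point requiring some care is the weak-to-strong continuity of each $I_k$, but this is essentially automatic because of the finite dimensionality of the range of $P_k$; no mollification or cutoff issues arise with boundary conditions or divergence-freeness, since $P_k$ projects inside $H(\Omega)$ by construction.
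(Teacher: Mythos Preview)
Your proof is correct and takes a genuinely different route from the paper. The paper approximates $I$ by mollification: it sets $I_\varepsilon(m)=m*\eta_\varepsilon$ for a standard spacetime mollifier $\eta_\varepsilon$, observes that each $I_\varepsilon$ is continuous from $(X,d)$ to $L^2([0,T],H(\Omega))$ (convolution with a fixed smooth kernel is a compact operator on bounded domains, so it upgrades weak convergence to strong), and that $I_\varepsilon\to I$ pointwise by standard properties of mollifiers. You instead exploit the Hilbert space structure of $H(\Omega)$ directly, approximating $I$ by finite-rank spectral projections $I_k=P_k$, whose compactness is immediate. Your approach has the virtue of sidestepping any boundary or divergence-freeness issues that mollification near $\partial\Omega$ might raise, since $P_k$ lands in $H(\Omega)$ by construction; the paper's approach is closer to the original De Lellis--Sz\'ekelyhidi argument and would generalise more readily to settings without a convenient Hilbert structure. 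Both arguments rely on the same uniform bound $\|m(\cdot,t)\|_{L^2}\leq C$ (coming from $|m|^2\leq\rho_0\chi$) to justify dominated convergence and to guarantee completeness of $(X,d)$, and both conclude via the classical fact that a Baire-$1$ map on a complete metric space has a residual set of continuity points.
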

The last ingredient is the so-called perturbation property. It is a cornerstone for the construction of weak solutions for~\eqref{compressible Euler equations in terms of the momentum on the strip}.
\begin{lem}\label{perturbation property}
Let $\rho_0,\chi$ be given as in Lemma \ref{Subsolution criterion}. Then there exists a constant $\beta >0$ so that given $m\in X_0$, there exists a sequence $(m_k)_{k\in \mathbb{N}}\subset X_0$ with
\begin{align*}
\|m_k\|_{L^2(\Omega\times [0,T])}^2\geq \|m\|_{L^2(\Omega\times [0,T])}^2 + \beta \left(\int_{\Omega}\int_0^T \rho_0(r,z)\chi(t) dt dz dr - \|m\|_{L^2(\Omega\times [0,T])}^2\right)^2
\end{align*}
and $m_k\to m$ in $C([0,T],H_w(\Omega;\mathbb{R}^2))$.
\end{lem}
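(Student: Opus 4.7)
The approach is the standard Tartar--De Lellis--Sz\'ekelyhidi perturbation argument of~\cite{convexintegration2}, adapted to the hyperinterior set-up of Chiodaroli~\cite{Chiodaroli}. Because $m \in X_0$, the accompanying $U$ is continuous with $e(\rho_0, m, U) < \chi/2$ pointwise, so the gap
\[
g(r,z,t) := \chi(t)\rho_0(r,z) - |m(r,z,t)|^2
\]
is continuous, nonnegative, and strictly positive wherever $e(\rho_0,m,U) < \chi/2$, by Lemma~\ref{properties of the energy}(ii). Note $\iint_{\Omega\times(0,T)} g\, dr\,dz\,dt = \iint_{\Omega\times(0,T)} \rho_0\chi\, dr\,dz\,dt - \|m\|_{L^2(\Omega\times[0,T])}^2$.

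I would first prove a pointwise ingredient: at any $(r_0,z_0,t_0)$ with $g > 0$, since $(m,U,q)(r_0,z_0,t_0)$ lies in the hyperinterior of $K^{co}_{\rho_0,\chi}$ by Lemma~\ref{properties of the energy}(iv), the standard $\Lambda$-segment lemma produces a direction $(\bar m, \bar U, 0) \in \Lambda$ with vanishing $q$-component (so as to preserve $q\equiv q_0$) and amplitude $|\bar m|^2 \geq c\, g(r_0,z_0,t_0)$, for which the whole segment $(m,U,q) + s(\bar m,\bar U,0)$, $s \in [-1,1]$, remains in $K^{co}_{\rho_0,\chi}$. Because $(\bar m, \bar U, 0) \in \Lambda$, the wave-cone characterization $\det M = 0$ ensures a direction $\xi \in \mathbb{R}^3 \setminus \{0\}$ along which plane waves solve~\eqref{Sublsg system}. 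Localizing this plane wave via a cutoff $\eta \in C^\infty_c(\Omega_{\mathbb R}\times(0,T))$ and oscillating at frequency $N$ using Tartar's potential technique yields a compactly supported $(\bar m_N, \bar U_N, 0)$ solving~\eqref{Sublsg system}, tending to zero weakly in $L^2$ by Riemann--Lebesgue, and satisfying
\[
\|\bar m_N\|_{L^2}^2 \to \tfrac{1}{2}|\bar m|^2 \int \eta^2 \, dr\,dz\,dt \quad \text{as } N \to \infty.
\]

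Next I would globalize: cover a compact subset of $\{g > 0\}$ by finitely many small balls disjoint from $\{r=\delta\}\cup\{r=R\}\cup\{t=0,T\}$, choose amplitudes $|\bar m_i|^2 \sim g$ on each ball (so the perturbed triple stays strictly inside $K^{co}_{\rho_0,\chi}$), and sum the disjoint oscillatory perturbations to form $\tilde m$. Compact support makes the slip/periodicity boundary conditions and the $t=0,T$ matching automatic. Summing the local $L^2$-gains yields, for a universal constant $C>0$,
\[
\|m + \tilde m\|_{L^2}^2 - \|m\|_{L^2}^2 \geq C \iint_{\Omega\times(0,T)} g\,dr\,dz\,dt.
\]
Since $0 \leq \iint g \leq \iint \rho_0\chi =: M$, one has $\iint g \geq M^{-1}(\iint g)^2$, which converts this linear bound into the claimed quadratic bound with $\beta = C/M$. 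Finally, iterating with frequencies $N_k \to \infty$ produces a sequence $m_k \to m$ in $C([0,T]; H_w(\Omega;\mathbb{R}^2))$ still satisfying the energy gain.

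The principal obstacle is to verify at each step that $\tilde m \in X_0$, namely that the perturbed triple still satisfies the strict constraint $e(\rho_0, m+\tilde m, U+\tilde U) < \chi/2$. This is ensured by taking perturbation amplitudes proportional to $\sqrt{g}$ so that the perturbed state remains in the hyperinterior and not on the boundary $\{|m|^2 = \rho_0\chi\}$ (which is reached only in the weak limit after iteration via Lemmas~\ref{weak solution condition}--\ref{bairelem}). The argument otherwise mirrors Chiodaroli's construction in~\cite{Chiodaroli} with routine modifications to accommodate the strip geometry with slip boundary conditions in $r$ and periodicity in $z$; these modifications are innocuous precisely because all local perturbations are chosen compactly supported away from the boundary of $\Omega_{\mathbb R}\times(0,T)$.
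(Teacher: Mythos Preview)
Your outline is precisely the argument the paper invokes by citing \cite[Lemma~4.5]{Chiodaroli}: the De Lellis--Sz\'ekelyhidi perturbation scheme in Chiodaroli's hyperinterior set-up, with compactly supported oscillations away from the spatial boundary and from $t\in\{0,T\}$ so that the slip, periodicity, and endpoint conditions are automatic. One small inaccuracy: the $\Lambda$-segment lemma in \cite{convexintegration2,Chiodaroli} gives $|\bar m|\geq c\,g$ (hence $|\bar m|^2\geq c^2 g^2$), not $|\bar m|^2\geq c\,g$ as you write; the local $L^2$-gain is therefore at least a constant times $\int g^2$, and the quadratic bound $\beta\bigl(\int g\bigr)^2$ then follows directly from Cauchy--Schwarz, $\int g^2\geq \bigl(\int g\bigr)^2/|\Omega\times(0,T)|$, rather than from your conversion via $\int g\leq M$.
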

\begin{proof}
The proof can be consulted from \cite[Lemma 4.5]{Chiodaroli}.
\end{proof}
\begin{proof}[Proof of Lemma \ref{Subsolution criterion}]
We show that all points of continuity of the identity map $I\colon (X,d)\to L^2([0,T],H(\Omega))$ correspond to solutions of (\ref{compressible Euler equations in terms of the momentum on the strip}) satisfying the conditions of Lemma \ref{Subsolution criterion}. To this end, we prove that if $m\in X$ is a point of continuity of $I$ we have
\begin{align*}
|m(r,z,t)|^2=\rho_0(r,z)\chi(t) \text{ for a.e. } (r,z,t)\in \Omega\times (0,T).
\end{align*}
As in \cite{Chiodaroli} we only have to show that
\begin{align*}
\|m\|_{L^2(\Omega\times (0,T))} \geq \left(\int_{\Omega}\int_0^T\rho_0(r,z)\chi(t) dt dz dr\right)^{\frac{1}{2}}
\end{align*}
since $|m(r,z,t)|^2\leq \rho_0(r,z)\chi(t)$ 
for almost all $(r,z,t)\in \Omega\times (0,T)$ and for any $m\in X$.\\
Now, let $m\in X$ be a point of continuity of $I$. By density of $X_0$ there exists $(m_k)_{k\in \mathbb{N}}\subset X_0$ for which $m_k\to m$ in $C([0,T],H_w(\Omega;\mathbb{R}^2))$. Due to Lemma \ref{perturbation property}, for any $k$ there exists a sequence $(m_{k_l})_{l\in \mathbb{N}}\subset X_0$ so that $m_{k_l}\to m_k$ in $C([0,T],H_w(\Omega))$ and
\begin{align*}
\|m_{k_l}\|_{L^2(\Omega\times [0,T])}^2&\geq \|{m_k}\|_{L^2(\Omega\times [0,T])}^2\\ &+ \beta \left(\int_{\Omega}\int_0^T \rho_0(r,z)\chi(t) dt dz dr - \|{m_k}\|_{L^2(\Omega\times [0,T])}^2\right)^{\frac{1}{2}}.
\end{align*}
Now, by a diagonal argument, for any $k\in \mathbb{N}$ there exists a $\tilde{m}_k\in X_0$ so that $\tilde{m}_k \to m$ in $C([0,T],H_w(\Omega))$ and
\begin{align*}
\|\tilde{m_k}\|_{L^2(\Omega\times [0,T])}^2&\geq \|{m_k}\|_{L^2(\Omega\times [0,T])}^2\\ &+ \beta \left(\int_{\Omega}\int_0^T \rho_0(r,z)\chi(t) dt dz dr - \|{m_k}\|_{L^2(\Omega\times [0,T])}^2\right)^{\frac{1}{2}}
\end{align*}
by Lemma \ref{perturbation property}.
As a consequence we have
\begin{align*}
\|m\|_{L^2(\Omega\times [0,T])}^2&\geq \|m\|_{L^2(\Omega\times [0,T])}^2\\ &+ \beta \left(\int_{\Omega}\int_0^T \rho_0(r,z)\chi(t) dt dz dr - \|m\|_{L^2(\Omega\times [0,T])}^2\right)^{\frac{1}{2}},
\end{align*}
which implies $\|m\|_{L^2(\Omega\times [0,T])}\geq\int_{\Omega}\int_0^T \rho_0(r,z) \chi(t) dt dz dr$. Lemma \ref{weak solution condition} finishes the proof.
\end{proof}
For the purpose of finding an initial subsolution, we will present a new construction. This construction is one of the main novelties of this work:
\begin{lem}\label{Konstruktion Sulsg}
Let $\rho_0, p\colon \Omega_{\mathbb{R}}\to \mathbb{R}$ be given as in (\ref{Dichte}), i.e.
\begin{align*}
\rho_0(r,z)=r,\quad p(\rho_0(r,z))=r^{\gamma}
\end{align*}
for $\gamma >1$. Then there exist $\tilde{m} \in C([0,T],C(\overline{\Omega};\mathbb{R}^2))\cap C([0,T],H_w(\Omega;\mathbb{R}^2))$ and $\tilde{U}\in C^1(\overline{\Omega}\times [0,T];S_0^2)$ such that for every $\chi\in C([0,T])$ satisfying $$\chi(t)> 2\|e(\rho_0(\cdot),\tilde{m}(\cdot,t),\tilde{U}(\cdot,t))\|_{L^{\infty}(\Omega_{\mathbb{R}})}$$ and for $$q_0(r,t)=p(\rho_0(r,z))+\frac{\chi(t)}{2},$$ it holds that 
\begin{align}
\partial_t \tilde{m}+\operatorname{div}(\tilde{U})+\nabla q_0=0 &\text{ in } \Omega_{\mathbb{R}}\times (0,T)\label{Sublsg},\\
\operatorname{div}(\tilde{m})=0&\text{ in } \Omega_{\mathbb{R}}\times (0,T),\label{Divergenzbedingung}\\
\tilde{m}_r=0 &\text{ on } \{r=\delta\}\cup \{r=R\}\label{slip rand bed},\\
e(\rho_0(r,z),\tilde{m}(r,z,t),\tilde{U}(r,z,t))<\frac{\chi(t)}{2} &\text{ in } \Omega_{\mathbb{R}}\times (0,T)\label{Energie}.
\end{align}

\end{lem}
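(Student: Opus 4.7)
The plan is to exploit translational invariance in $z$ by seeking subsolutions that are independent of $z$ (and for simplicity also of $t$), with momentum pointing in the vertical direction as indicated in the introduction. Concretely, I would make the ansatz
\begin{align*}
\tilde m(r,z,t) = \begin{pmatrix} 0 \\ M(r) \end{pmatrix}, \qquad \tilde U(r,z,t) = \begin{pmatrix} -r^\gamma & 0 \\ 0 & r^\gamma \end{pmatrix},
\end{align*}
where $M \in C^1([\delta, R])$ is left free (this flexibility is useful for later prescribing initial data). The matrix $\tilde U$ is trace-free and $C^1$ on $\overline{\Omega} \times [0,T]$, and $\tilde m$ lies in $C([0,T], C(\overline \Omega; \mathbb{R}^2)) \cap C([0,T], H_w(\Omega; \mathbb{R}^2))$ since it is $z$-periodic, bounded on the bounded domain $\Omega$, and has vanishing radial component on the lateral boundary.

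The key observation is that $q_0 = r^\gamma + \chi(t)/2$ depends only on $r$ and $t$, so its spatial gradient $\nabla q_0 = (\gamma r^{\gamma-1}, 0)^T$ has no $z$-component and does not involve $\chi$. A one-line computation gives $\operatorname{div} \tilde U = (-\gamma r^{\gamma-1}, 0)^T$ and $\partial_t \tilde m = 0$, so the momentum equation (\ref{Sublsg}) holds identically. The divergence-free condition (\ref{Divergenzbedingung}) follows from $\partial_r(0) + \partial_z M(r) = 0$, and the slip boundary condition (\ref{slip rand bed}) holds trivially because $\tilde m_r \equiv 0$.

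For the energy constraint (\ref{Energie}), I compute
\begin{align*}
\frac{\tilde m \otimes \tilde m}{\rho_0} - \tilde U = \begin{pmatrix} r^\gamma & 0 \\ 0 & \dfrac{M(r)^2}{r} - r^\gamma \end{pmatrix},
\end{align*}
whose eigenvalues are $r^\gamma$ and $M(r)^2/r - r^\gamma$. Both are uniformly bounded on $\overline{\Omega_\mathbb{R}}$, since $r \in [\delta, R]$ is bounded away from $0$ and $\infty$ and $M$ is continuous. Hence $e(\rho_0, \tilde m(\cdot, t), \tilde U(\cdot, t))$ is bounded uniformly in $t$, and any $\chi \in C([0,T])$ obeying the stated lower bound automatically yields the required strict inequality at every $(r,z,t) \in \Omega_\mathbb{R} \times (0,T)$.

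The main obstacle in subsolution constructions is typically reconciling the nonlinear pointwise energy constraint with the linear PDE together with boundary conditions; here the difficulty largely evaporates, because the $z$-independent ansatz with purely vertical momentum decouples the system: the balance $\operatorname{div} \tilde U = -\nabla q_0$ reduces to a one-dimensional ODE in $r$ solved by a diagonal $\tilde U$ absorbing the polytropic pressure, while the slip boundary condition and the $z$-periodicity are satisfied for free.
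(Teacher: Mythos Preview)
Your proof is correct and follows essentially the same strategy as the paper: a purely vertical, $z$-independent momentum together with a trace-free $\tilde U$ whose diagonal part $\operatorname{diag}(-r^\gamma,r^\gamma)$ exactly cancels $\nabla q_0$. The paper's version allows a time-dependent profile $\tilde m=(0,\tilde\chi(t)r)^T$, which forces an extra off-diagonal entry $\tilde U_{rz}=-\tilde\chi'(t)\tfrac{r^2}{2}$; your time-independent ansatz is the special case $\tilde\chi'\equiv 0$ (with the radial profile generalized from $r$ to an arbitrary $M(r)$), so $\tilde U$ stays diagonal and the verification is a bit cleaner, while nothing is lost for the subsequent arguments.
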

\begin{proof}
We take the ansatz $\tilde{m}(r,z,t)=\begin{pmatrix}
0\\ \tilde{\chi}(t) \rho_0(r,z)\end{pmatrix}$ for a fixed function $\tilde{\chi}\in C^{\infty}([0,T],\mathbb{R}^{+})$. Note that this choice immediately gives $\tilde{m}\in C^\infty([0,T],H_w(\Omega;\mathbb{R}^2))\cap C^\infty([0,T],C^\infty(\overline{\Omega};\mathbb{R}^2))$. Moreover $\tilde{m}$ fulfils (\ref{Divergenzbedingung}) and (\ref{slip rand bed}). Then (\ref{Sublsg}) is equivalent to finding $\tilde{U}\colon \Omega_{\mathbb{R}}\times [0,T]\to S_0^2$ smooth so that
\begin{align*}
\begin{pmatrix}
\partial_r {\tilde{U}}_{rr}(r,z,t)+\partial_z {\tilde{U}}_{rz}(r,z,t)\\\partial_r {\tilde{U}}_{zr}(r,z,t)+ \partial_z {\tilde{U}}_{zz}(r,z,t)
\end{pmatrix} =\begin{pmatrix}
0\\-\tilde{\chi}'(t)\rho_0(r,z)
\end{pmatrix}+ \begin{pmatrix}
-\partial_r p(\rho_0(r,z))\\0
\end{pmatrix}.
\end{align*}
We choose ${\tilde{U}}_{rr}=-p(\rho_0)$, ${\tilde{U}}_{rz}={\tilde{U}}_{zr}=-\tilde{\chi}'\int_{0}^{r} \rho_0(s) ds$ and ${\tilde{U}}_{zz}=p(\rho_0)$. Then $\tilde{U}$ is a symmetric trace-free matrix solving
\begin{align*}
\partial_t \tilde{m}+ \operatorname{div}{\tilde{U}}+\nabla q_0=0
\end{align*}
in the sense of distributions. Moreover for $(r,z,t)\in \Omega_{\mathbb{R}}\times (0,T)$, we trivially have
\begin{align*}
e(\rho_0(r,z),\tilde{m}(r,z,t),\tilde{U}(r,z,t))\leq \|e(\rho_0(\cdot),\tilde{m}(\cdot,t),\tilde{U}(\cdot,t))\|_{\infty}<\frac{\chi(t)}{2}
\end{align*}
for all $\chi\in C^{\infty}([0,T],\mathbb{R}^+)$ with
\begin{align}\label{Ungleichung für chi}
\chi(t)> 2\|e(\rho_0(\cdot),\tilde{m}(\cdot,t),\tilde{U}(\cdot,t))\|_{L^{\infty}(\Omega_{\mathbb{R}})}.
\end{align}
Note the right-hand side admits a uniform bound in $t$, because $e$ is a continuous function composed with the bounded functions $\rho_0$, $\tilde m$, and $\tilde U$. Therefore,~\eqref{Ungleichung für chi} can be satisfied by suitable choice of $\chi$.

\end{proof}
The subsolutions we just constructed are not yet suitable inital subsolutions since they do not necessarily satsify the local energy inequality. In fact, ~\eqref{Ungleichung für chi} together with Lemma  \ref{properties of the energy}(ii) reveals that
\begin{align}\label{prohibition}
\int_{\Omega}|\tilde{m}(\cdot,0)|^2 dzdr \leq 2\int_{\Omega} \rho_0 e(\rho_0,\tilde{m}(\cdot,0),\tilde{U}(\cdot,0))dz dr < \int_{\Omega} \chi(0)\rho_0(r,z) dz dr.
\end{align}
We will show that if a subsolution $(m_0,U_0)$ satsifies the initial condition 
\begin{equation}\label{doubleCI}
|{m}_0(r,z,0)|^2=\rho_0(r,z)\chi(0)
\end{equation} 
for a.e.\ $(r,z)\in \Omega_{\mathbb{R}}$, then we can construct weak solutions which up to some time obey the local energy inequality. Unfortunately,~\eqref{prohibition} prohibits this property. To overcome this discrepancy, we will construct subsolutions $(m_0,U_0,q_0)$ fulfilling~\eqref{doubleCI} via convex integration. Since this construction requires a convex integration step to get suitable subsolutions and then again a convex integration to get from subsolutions to weak solutions, this mechanism is sometimes called {\em double convex integration}. Let us now construct the aforementioned suitable initial subsolutions:
\begin{lem}\label{Subsolution lemma}
Let $\rho_0$, $p$, $\chi$, and $(\tilde{m},\tilde{U},q_0)$ be as in Lemma \ref{Konstruktion Sulsg}. Then there exists $(m_0,U_0)$ so that
\begin{align*}
\begin{cases}
&\partial_t m_0+\operatorname{div}(U_0)+\nabla q_0=0\\
&\operatorname{div}(m_0)=0
\end{cases}
\end{align*} 
distributionally in $\Omega_{\mathbb{R}}\times (0,T)$ and $(m_0)_r=0$ on $\{r=\delta\}\cup \{r=R\}$ with the following properties:
\begin{enumerate}[(i)]
\item $(m_0,U_0,q_0)$ is continuous on $\Omega_{\mathbb{R}}\times (0,T]$,
\item $m_0\in C([0,T],H_w(\Omega;\mathbb{R}^2))$,
\item $e(\rho_0(r,z),m_0(r,z,t),U_0(r,z,t))<\frac{\chi(t)}{2}$ for all $(r,z,t)\in \Omega_{\mathbb{R}}\times (0,T]$,
\item $|m_0(r,z,0)|^2=\rho_0(r,z)\chi(0)$ for almost all $(r,z)\in \Omega_{\mathbb{R}}$\label{initial value condition}.
\end{enumerate}
\end{lem}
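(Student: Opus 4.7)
My plan is a \emph{double convex integration}: the smooth subsolution $(\tilde{m},\tilde{U},q_0)$ from Lemma~\ref{Konstruktion Sulsg} already satisfies (i)–(iii), but fails (iv) because the pointwise magnitude $|\tilde{m}(\cdot,0)|^2 = \tilde\chi(0)^2\rho_0^2$ cannot equal the linear profile $\rho_0\chi(0)$. The strategy is to run a Baire-category/perturbation argument analogous to the proof of Lemma~\ref{Subsolution criterion}, but engineered so that the pointwise saturation occurs at the single time $t=0$, while the strict inequality $e(\rho_0,m_0,U_0)<\chi(t)/2$ is maintained for every $t>0$.

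Concretely, I would introduce a space $\widetilde{X}_0$ of continuous subsolutions on $\Omega_{\mathbb{R}}\times [0,T]$ that satisfy \eqref{Sublsg erste Bedingung}, the strict energy bound on $\Omega_{\mathbb{R}}\times (0,T]$, and weak continuity in $H_w(\Omega;\mathbb{R}^2)$ at $t=0$, containing $\tilde{m}$ as one distinguished element. Its closure $\widetilde{X}$ in the $C([0,T];H_w(\Omega;\mathbb{R}^2))$ topology is a complete metric space. I would then establish a \emph{localized} variant of the perturbation property: given $m\in\widetilde{X}_0$ and a shrinking window $[0,\varepsilon_n]$, an oscillatory wedge-type correction supported in that window increases $\|m(\cdot,0)\|_{L^2(\Omega)}^2$ by a quantitative amount while keeping $m+\delta m_n\in\widetilde{X}_0$. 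This is achieved by the plane-wave shear construction used in the proof of Lemma~\ref{perturbation property}, with a temporal cutoff concentrating at $t=0$, and with oscillation frequency chosen large enough (depending on $\varepsilon_n$) that the cutoff does not spoil the strict bound at positive times. The Baire-1 map $m\mapsto m(\cdot,0)\in L^2(\Omega;\mathbb{R}^2)$ is continuous on a residual subset of $\widetilde{X}$, and by a diagonal argument exactly as in the proof of Lemma~\ref{Subsolution criterion}, at any such continuity point $m_0$ one has $\|m_0(\cdot,0)\|_{L^2(\Omega)}^2 = \int_\Omega\rho_0(r,z)\chi(0)\,dz\,dr$. Combined with the a.e.\ pointwise bound $|m_0|^2\leq\rho_0\chi$ (which follows from (iii) and Lemma~\ref{properties of the energy}(ii), and passes to $t=0$ by weak continuity and lower semicontinuity of the $L^2$-norm), this $L^2$-saturation upgrades to the a.e.\ pointwise identity $|m_0(\cdot,0)|^2 = \rho_0(r,z)\chi(0)$, yielding (iv).

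The hard part is the localized perturbation step: the existing perturbation (Lemma~\ref{perturbation property}) is designed to increase the $L^2$-mass over the whole cylinder $\Omega\times [0,T]$, so shrinking its time support to $[0,\varepsilon_n]$ and still extracting a definite $L^2$-gain at $t=0$ requires a delicate balance of the temporal cutoff against the spatial oscillation frequency. One must verify that the perturbed field $(m+\delta m_n,\,U+\delta U_n)$ keeps $e<\chi(t)/2$ for all $t>0$; this relies crucially on the smoothness of $(\tilde{m},\tilde{U})$ provided by Lemma~\ref{Konstruktion Sulsg}, which leaves a definite gap to the constraint boundary that the localized perturbation must fit into. Finally, the continuity on $\Omega_{\mathbb{R}}\times (0,T]$ asked for in (i) is not automatic from $C([0,T],H_w)$ convergence, and I would secure it by arranging that on every compact subinterval $[t_0,T]\subset (0,T]$ the constructed $(m_0,U_0)$ agrees, up to negligible high-frequency corrections that can be absorbed into $\tilde U$, with the original smooth pair $(\tilde m,\tilde U)$.
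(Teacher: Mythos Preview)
Your overall strategy—localizing the oscillatory perturbations in shrinking time windows near $t=0$ so as to saturate the constraint only at the initial time—is exactly the idea behind the paper's proof, and the localized perturbation property you describe is essentially the ``Claim'' the paper isolates. Your reasoning for (iv) via $L^2$-saturation combined with the a.e.\ pointwise bound is also correct in spirit.

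The genuine gap is in how you propose to secure property~(i). A Baire-category argument hands you a \emph{generic} element $m_0$ of the closure $\widetilde X$, and elements of $\widetilde X$ need not be continuous on $\Omega_{\mathbb{R}}\times(0,T]$: pointwise continuity is not closed under $C([0,T],H_w)$ limits. Your proposed fix---``arranging'' that $(m_0,U_0)$ agrees with $(\tilde m,\tilde U)$ on $[t_0,T]$ up to ``negligible high-frequency corrections that can be absorbed into $\tilde U$''---is incompatible with the non-constructive Baire framework, since you cannot impose such structure on a generic continuity point after the fact, and the phrase about absorption has no clear meaning.

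The paper resolves this by dropping Baire category in favour of an \emph{explicit iteration}. One fixes an exhaustion $\Omega_k\Subset\Omega_{k+1}\Subset\Omega$ and at step $k$ applies the localized perturbation with support in $\Omega_k\times[0,2^{-k}T]$, obtaining a sequence $(m_k,U_k)$ that is Cauchy in $(\widetilde X,d)$. The decisive observation is that on every compact $C\subset\Omega\times(0,T)$ the sequence $(m_k,U_k)$ is \emph{eventually constant}, because the supports of successive increments eventually miss $C$; hence $(m_k,U_k)\to(m_0,U_0)$ in $C_{\mathrm{loc}}(\Omega\times(0,T))$, and (i) and (iii) are immediate. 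Property~(iv) is then obtained directly, not via Baire continuity: the defects $\alpha_k=\int_{\Omega_k}\bigl(\rho_0\chi(0)-|m_k(\cdot,0)|^2\bigr)\,dz\,dr$ satisfy a recursion forcing $\alpha_k\to 0$, and a separate mollifier argument upgrades the $C([0,T],H_w)$ convergence to strong $L^2$ convergence of $m_k(\cdot,0)\to m_0(\cdot,0)$, which yields the a.e.\ identity.
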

The proof is very similar to that of~\cite[Proposition 5]{convexintegration2} and is given in the appendix.

\begin{proof}[Proof of Theorem \ref{non-uniqueness for compressible Euler equations}]
Assume that $\rho_0$, $p_0$, $(m_0,U_0, q_0)$, and $\chi\in C^{\infty}([0,T],\mathbb{R}^{+})$ are given as in Lemma \ref{Subsolution lemma}. 
Then by Lemma \ref{Subsolution criterion}, there exist infinitely many weak solutions $(\rho,m)\in C^1(\Omega)\times C([0,T],H_w(\Omega;\mathbb{R}^2))$ of~\eqref{Sublsg kompressible Euler} with $\rho=\rho_0$ so that
\begin{align*}
\begin{cases}
m(r,z,0)=m_0(r,z,0) &\text{ for a.e. } (r,z)\in \Omega_{\mathbb{R}},\\
|m(r,z,t)|^2=\rho_0(r,z)\chi(t) &\text{ for a.e. } (r,z,t)\in\Omega_{\mathbb{R}}\times (0,T],\\
m_r=0 &\text{ on } \{r=\delta\}\cup \{r=R\}.
\end{cases}
\end{align*}
If we set $m^0(\cdot)=m_0(\cdot,0)$, the momentum field $m$ satisfies~\eqref{pointwise constraint subsolutions compressible Euler}.
\end{proof}
\begin{proof}[Proof of Theorem \ref{non-uniqueness for compressible Euler equations with energy inequality}]
As we have seen in Theorem \ref{non-uniqueness for compressible Euler equations}, there exist $m^0\in L^{\infty}(\Omega;\mathbb{R}^2)$ and infinitely many admissible weak solutions $m$ of~\eqref{Sublsg kompressible Euler} with $\rho=\rho_0$, where for continuous $\chi\colon \mathbb{R}\to \mathbb{R}^+$ with $\chi(t)>2 \| e(\rho_0(\cdot),\tilde{m}(\cdot,t),\tilde{U}(\cdot,t)\|_{L^{\infty}(\Omega)}$ and $\tilde{m}$, $\tilde{U}$ from Lemma \ref{Konstruktion Sulsg} we have
\begin{align}\label{Darstellung m ueber chi}
|m(r,z,t)|^2 &= \rho_0(r,z) \chi(t) &&\text{ a.e.\ in } \Omega_{\mathbb{R}}\times (0,T],\\
|m(r,z,0)|^2&=\rho_0(r,z)\chi(0) &&\text{ a.e.\ in } \Omega_{\mathbb{R}},\notag\\
m_r&=0 &&\text{ on } \{r=\delta\}\cup \{r=R\}.\notag
\end{align}
Now, we show that we can select weak solutions from Theorem \ref{non-uniqueness for compressible Euler equations} which satisfy the admissibility condition~\eqref{local energy inequality for incompressible Euler equations momentum}  if $\chi$ is chosen properly.  More precisely, we show that if $\chi$ satisfies $\chi(t)>2 \| e(\rho_0(\cdot),\tilde{m}(\cdot,t),\tilde{U}(\cdot,t)\|_{L^{\infty}(\Omega)}$ and solves a certain differential inequality, then weak solutions of~\eqref{Sublsg kompressible Euler} will obey~\eqref{local energy inequality for incompressible Euler equations momentum}.

As a starting point, we assume that $\chi>2 \| e(\rho_0,\tilde{m},\tilde{U})\|_{L^{\infty}(\Omega)}$ satisfies the following differential inequality:
\begin{align}\label{differential inequality}
\frac{1}{2}\chi'(t) \leq  -R^{\frac{1}{2}}\gamma\max\{R^{\gamma-2},\delta^{\gamma-2}\}\chi(t)^{\frac{1}{2}}-\frac{R^{\frac{1}{2}}}{2\delta^{2}}\chi(t)^{\frac{3}{2}}
\end{align}
with an initial value to be fixed at the end of this proof.
Recall that $\varepsilon(\rho_0)+\frac{p(\rho_0)}{\rho_0}=\frac{\gamma}{\gamma -1}r^{\gamma -1}$, which implies
\begin{align*}
\left|\nabla\left(\varepsilon(\rho_0)+\frac{p(\rho_0)}{\rho_0}\right)\right|\leq \gamma \max\{R^{\gamma-2},\delta^{\gamma-2}\}.
\end{align*}
Moreover we have 
\begin{align*}
\left|\nabla\left(\frac{1}{\rho_0}\right)\right|= \frac{1}{r^2}\leq \frac{1}{\delta^{2}}.
\end{align*}
Note that due to (\ref{Darstellung m ueber chi}) we have $|m|\leq \rho^{\frac{1}{2}}_0\chi^{\frac{1}{2}}$ a.e. in $\Omega\times [0,T]$.
Hence,~\eqref{differential inequality} entails the differential inequality 
\begin{align}\label{equivalent energy inequality}
\frac{1}{2}\chi'(t)\leq - m\cdot \nabla \left(\varepsilon(\rho_0)+ \frac{p(\rho_0)}{\rho_0}\right)- \frac{\chi(t)}{2}m \cdot\nabla \left(\frac{1}{\rho_0}\right).
\end{align}
Recall that $\rho_0=r$ is time-independent, so that a straightforward calculation shows that (\ref{equivalent energy inequality}) is equivalent to (\ref{local energy inequality for incompressible Euler equations momentum}). 
Now, for $\chi$ to satisfy~\eqref{differential inequality}, we demand $\chi$ to be the solution of 
\begin{align*}
\frac{1}{2}\chi'(t)=-R^{\frac{1}{2}}\gamma\max\{R^{\gamma-2},\delta^{\gamma-2}\}\chi(t)^{\frac{1}{2}}-\frac{R^{\frac{1}{2}}}{2\delta^{2}}\chi(t)^{\frac{3}{2}}
\end{align*}
with $\chi(0)=\chi^0$ chosen sufficiently large so that $\chi(t) >2 \| e(\rho_0(\cdot),\tilde{m}(\cdot,t),\tilde{U}(\cdot,t)\|_{L^{\infty}(\Omega)}$ up to some time $T>0$.
\end{proof}
\section{Symmetry breaking in the case of axisymmetry}
In this section we want to provide an example of axisymmetric swirl-free initial data for the $3D$ Euler equations for which the axisymmetry breaks down in the evolution. 
Similarly as before, for fixed $0<\delta<R$ we consider
\begin{align*}
\Omega\coloneqq \left\{(r,\theta,z):~\delta<r<R,~\theta \in \left[0,2\pi\right],~z\in \mathbb{T} \right\}.
\end{align*}
Let us recall the notion of subsolutions. In analogy to Section \ref{Section Axisymmetric swirl-free Euler equations vs isentropic compressible Euler equations} we introduce the space of solenoidal vector fields $H(\Omega;\mathbb{R}^3)$ as the completion of
\begin{align}
\{u\in C_c^{\infty}(\Omega;\mathbb{R}^3):\operatorname{div}(u)=0\}
\end{align}
with respect to the $L^2(\Omega;\mathbb{R}^3)$ topology. Again, $S_0^3$ is the space of symmetric trace-free $3\times 3$ matrices.
\begin{Defi}
A subsolution of the $3D$ incompressible axisymmetric swirl-free Euler equations with respect to the energy profile $\overline{e}$ is a triple $(v,U,q)\colon \Omega\times (0,T)\to \mathbb{R}^3\times S_0^3\times \mathbb{R}$, $v\in L^{\infty}((0,T),H(\Omega;\mathbb{R}^3))$ swirl-free and axisymmetric, $U\in L^1_{loc}(\Omega\times (0,T);S_0^3)$ and $q\in \mathcal{D}'(\Omega\times (0,T))$ such that
\begin{align}\label{Sublsg System incompressible Euler equations}
\begin{cases}
\partial_t v+ \operatorname{div}(U)+ \nabla q =0 &\text{ in } \Omega_{\mathbb{R}}\times (0,T)\\
\operatorname{div}(v) =0 &\text{ in } \Omega_{\mathbb{R}}\times (0,T)
\end{cases}
\end{align}
in the sense of distributions and 
\begin{align*}
\lambda_{\max}(v\otimes v - U )\leq \frac{2}{3}\overline{e},
\end{align*}
where $\lambda_{\max}$ denotes the largest eigenvalue.
\end{Defi}
\begin{rk}
In order to construct subsolutions, we will take the ansatz $U=U(r,z,t)$, $q=q(r,z,t)$, and $U_{r\theta}=U_{\theta r}=U_{\theta \theta}=U_{z\theta}=U_{\theta z}=0$, whereupon~\eqref{Sublsg System incompressible Euler equations} becomes equivalent to
\begin{align}\label{Sublsg in cylindrical coordinates}
\begin{cases}
\partial_t v_r + \partial_r U_{rr} + \frac{U_{rr}}{r} + \partial_z U_{rz}+\partial_r q=0\\
\partial_t v_z+ \partial_r U_{zr}+ \frac{U_{zr}}{r}+ \partial_z U_{zz}+\partial_zq=0\\
\partial_r(rv_r)+\partial_z(rv_z)=0.
\end{cases}
\end{align}
\end{rk}
Let us now give the main tool to get from subsolutions to weak solutions:
\begin{thm}\label{Subsolution criterion inscompressible Euler equations}
Let $\overline{e}\in L^{\infty}(\Omega\times (0,T))$ and let $(\overline{v},\overline{U},\overline{q})$ be a subsolution. Let $U\subset \Omega\times (0,T)$ be such that $(\overline{v},\overline{U},\overline{q})$ and $\overline{e}$ are continuous on $U$ and
\begin{align*}
\begin{cases}
e(\overline{v},\overline{U})<\overline{e} &\text{ in } U\\ 
e(\overline{v},\overline{U})=\overline{e} &\text{ a.e. in } (\Omega\times (0,T))\setminus U.
\end{cases}
\end{align*}
Then there exist infinitely many weak solutions $v\in L^{\infty}((0,T),H(\Omega;\mathbb{R}^3))$ of (\ref{Cauchy problem for the axisymmetric Euler equations}) such that
\begin{align*}
v=\overline{v} &\text{ a.e. in } (\Omega\times (0,T))\setminus U,\\
\frac{1}{2}|v|^2=\overline{e} &\text{ a.e. in } \Omega\times (0,T),\\
p=\overline{q}-\frac{2}{3}\overline{e} &\text{ a.e. in } \Omega\times (0,T).
\end{align*}
Moreover, if $\overline{v}(\cdot,t)\rightharpoonup v^0(\cdot)$ in $L^2(\Omega;\mathbb{R}^3)$ as $t\to 0$, then $v$ solves the Cauchy problem (\ref{Cauchy problem for the axisymmetric Euler equations}).
\end{thm}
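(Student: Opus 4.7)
The plan is to adapt the Baire-category convex integration scheme of~\cite{convexintegration2} and of Lemma~\ref{Subsolution criterion} above to the axisymmetric swirl-free setting. I would let $Y_0$ denote the set of axisymmetric swirl-free triples $(v,U,q)$ on $\Omega\times(0,T)$ which solve the linear system~\eqref{Sublsg System incompressible Euler equations}, satisfy $e(v,U)<\overline{e}$ pointwise on $\mathcal{U}$, and coincide a.e.\ with $(\overline{v},\overline{U},\overline{q})$ on $(\Omega\times(0,T))\setminus \mathcal{U}$. Since $e(v,U)\leq \overline{e}\in L^\infty$ gives the uniform bound $|v|^2\leq 2\overline{e}$, the weak $L^2$-topology on the $v$-component can be metrized, and I would endow the closure $Y$ of $Y_0$ with a complete metric $d$. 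As in Lemma~\ref{bairelem}, the identity map $I\colon(Y,d)\to L^2(\Omega\times(0,T))$ is then Baire-1, so its set of continuity points is residual and in particular uncountable.

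The core step is an axisymmetric version of the perturbation property: every $v\in Y_0$ admits a sequence $v_k\in Y_0$ with $v_k\rightharpoonup v$ in $C([0,T];H_w)$ and
\[\|v_k\|_{L^2}^2\geq\|v\|_{L^2}^2+\beta\left(\int_{\mathcal{U}}(2\overline{e}-|v|^2)\,dx\,dt\right)^2\]
for some universal $\beta>0$. The perturbations $v_k-v$ are built as localized sums of plane waves lying in the wave cone $\Lambda$ of~\eqref{Sublsg System incompressible Euler equations}, defined in analogy with the two-dimensional cone of Section~3 and~\cite{convexintegration}. To preserve axisymmetry and the swirl-free property I would restrict to perturbations depending only on $(r,z,t)$, with amplitude $\hat v=\hat v_r e_r+\hat v_z e_z$ and $\hat U$ of the block structure assumed in~\eqref{Sublsg in cylindrical coordinates}, multiplied by smooth cutoffs in $(r,z,t)$ supported inside $\mathcal{U}$; these clearly respect the slip condition $v_r=0$ on $\{r=\delta\}\cup\{r=R\}$. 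The main technical obstacle is to verify that this restricted slice of $\Lambda$ is still rich enough for a Tartar-type lamination to raise the local kinetic energy density toward $\overline{e}$. This reduces to an algebraic check on an effective two-dimensional cone that parallels the one in~\cite{convexintegration2}, the essential observation being that imposing $v_\theta=0$ and the block ansatz on $U$ does not eliminate any direction needed to approximate elements of the convex hull $K^{co}$.

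Granted the perturbation property, the standard diagonal argument from the proof of Lemma~\ref{Subsolution criterion} shows that any continuity point $v\in Y$ of $I$ satisfies $\int_{\mathcal{U}}(2\overline{e}-|v|^2)\,dx\,dt=0$, and combined with the pointwise bound $|v|^2\leq 2\overline{e}$ this forces $\tfrac12|v|^2=\overline{e}$ a.e.\ in $\Omega\times(0,T)$. Passing this equality back through the definition of a subsolution yields $U=v\otimes v-\tfrac{|v|^2}{3}I_3$ and $q=p+\tfrac{2}{3}\overline{e}$, so that $v$ is a distributional solution of~\eqref{Cauchy problem for the axisymmetric Euler equations} with pressure $p=\overline{q}-\tfrac{2}{3}\overline{e}$, while $v=\overline{v}$ outside $\mathcal{U}$. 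Residuality of the continuity points then provides infinitely many such $v$. For the Cauchy problem, arranging the perturbations to vanish in a neighbourhood of $t=0$ (possible since $\mathcal{U}$ is open in $\Omega\times(0,T)$) forces $v(\cdot,0)=\overline{v}(\cdot,0)$ weakly in $H(\Omega;\mathbb{R}^3)$, so the hypothesis $\overline{v}(\cdot,t)\rightharpoonup v^0$ as $t\to 0$ gives $v(\cdot,0)=v^0$ as required.
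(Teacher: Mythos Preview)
Your proposal misreads what the theorem is asking for. The paper's own proof is a one-line citation of \cite[Proposition~3.3]{convexintegration2}, i.e.\ the \emph{standard} three-dimensional incompressible subsolution criterion, with no symmetry constraint whatsoever on the perturbations. Despite the reference to~\eqref{Cauchy problem for the axisymmetric Euler equations} in the statement, the solutions produced are solutions of the full $3D$ Euler equations~\eqref{incompressible 3D Euler equations}; indeed, in the application (Theorem~\ref{symmetry breaking}) the prescribed energy profile $\overline e(r,\theta,t)$ genuinely depends on $\theta$, and the whole point of Section~4 is that the resulting $v$ is \emph{not} axisymmetric. Your plan restricts the perturbations to axisymmetric swirl-free fields depending only on $(r,z,t)$; but then $|v|^2$ is $\theta$-independent, and the conclusion $\tfrac12|v|^2=\overline e$ is impossible for general $\overline e\in L^\infty(\Omega\times(0,T))$. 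So the approach fails at the level of the statement itself, not merely at a technical step.

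There is a second, more structural issue. Even when $\overline e$ is axisymmetric, your proposed ``axisymmetric plane waves'' do not fit the Tartar framework: the reduced system~\eqref{Sublsg in cylindrical coordinates} has variable coefficients (the $1/r$ terms), so it is not a constant-coefficient linear PDE and admits no genuine plane-wave solutions of the form $(m,U,q)h(\xi\cdot(r,z,t))$. This is precisely the obstacle that forces the paper, for the symmetry-\emph{preserving} result of Theorem~\ref{non-uniqueness under symmetry preservation introduction}, to pass through the compressible reformulation of Sections~2--3 rather than run convex integration directly in cylindrical coordinates. For the present theorem no such detour is needed: one simply works in Cartesian coordinates on the $3D$ domain $\Omega$ and invokes~\cite{convexintegration2} verbatim.
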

\begin{proof}
A proof can be consulted from \cite[Proposition 3.3]{convexintegration2}.
\end{proof}
Let $0<\delta<r_0<R$ and define $\alpha_0\colon \Omega\to \mathbb{R}$
\begin{align*}
\alpha_0(r)=\begin{cases}
-\frac{1}{r} &\text{ if } r\in (\delta,r_0)\\
\frac{1}{r} &\text{ if } r\in (r_0,R).
\end{cases}
\end{align*}
Then we set
\begin{align}\label{initial data symmetry breaking}
v_0(x)=v_0(r,\theta,z)\coloneqq \begin{pmatrix}
0\\0\\\alpha_0(r)
\end{pmatrix},
\end{align}
where $(r,\theta,z)$ denote cylindrical coordinates. Obviously $v_0$ is divergence-free, axisymmetric and swirl-free.

Finally, we can prove Theorem~\ref{symmetry breaking introduction} in the following form:

\begin{thm}\label{symmetry breaking}
Let $v_0$ be as in~\eqref{initial data symmetry breaking}. Then, up to some time $T>0$, there exist infinitely many admissible weak solutions of~\eqref{Cauchy problem for the axisymmetric Euler equations} on $\mathbb{R}^3\times(0,T)$ which are not axisymmetric for any $t\in(0,T)$. 
\end{thm}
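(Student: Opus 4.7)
The initial datum $v_0=\alpha_0(r)\,e_z$ jumps from $-1/r_0$ to $+1/r_0$ across the cylinder $\{r=r_0\}$, an axisymmetric analogue of a planar vortex sheet. Following the 2D strategies of \cite{SZEKELYHIDI20111063,BardosSzWiedemann}, the plan is to construct an axisymmetric swirl-free subsolution $(\overline v,\overline U,\overline q)$ whose mixing region is a rarefaction fan $\mathcal F=\{(r,t):|r-r_0|<ct\}$ emanating from the interface, then invoke Theorem~\ref{Subsolution criterion inscompressible Euler equations} to produce infinitely many genuine weak solutions, and finally exploit the three-dimensional freedom of convex integration to pick those solutions whose velocity genuinely depends on $\theta$ at every positive time.

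\textbf{Building the subsolution.} I would take the ansatz $\overline v(r,z,t)=\overline\alpha(r,t)\,e_z$ with $\overline U$, $\overline q$ independent of $z$ and with vanishing $\theta$-row and $\theta$-column, so that~\eqref{Sublsg in cylindrical coordinates} collapses to the two radial ODEs
\begin{align*}
\partial_r(r\overline U_{rr})&=-r\,\partial_r\overline q,\\
\partial_r(r\overline U_{zr})&=-r\,\partial_t\overline\alpha.
\end{align*}
Outside $\mathcal F$ set $\overline\alpha=\alpha_0$; inside $\mathcal F$ look for a self-similar profile $\overline\alpha(r,t)=\phi((r-r_0)/t)$ interpolating between $-1/r_0$ and $+1/r_0$. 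Choosing $\overline U_{zr}$ as a suitable quadratic expression in $\overline\alpha$ recasts the second ODE as a one-dimensional Burgers-type equation for $\phi$, whose rarefaction solution provides the requisite Lipschitz profile. The remaining diagonal entries $\overline U_{rr}=-\overline U_{zz}$ and the pressure $\overline q$ can then be chosen to satisfy the first ODE and to secure the strict inequality $\lambda_{\max}(\overline v\otimes\overline v-\overline U)<\tfrac23\overline e$ throughout $\mathcal F$, for an energy profile $\overline e$ equal to $\tfrac12|\overline v|^2$ outside $\mathcal F$ and suitably decaying in $t$ to guarantee admissibility.

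\textbf{Symmetry breaking via Theorem~\ref{Subsolution criterion inscompressible Euler equations}, and main obstacle.} Applying the subsolution criterion with $U=\mathcal F$ delivers infinitely many weak solutions $v\in L^\infty((0,T);H(\Omega;\mathbb R^3))$ with $\tfrac12|v|^2=\overline e$ a.e.\ in $\mathcal F$, coinciding with $\overline v$ outside $\mathcal F$, and attaining $v_0$ as $t\downarrow 0$. Crucially, the wave cone in the three-dimensional setting contains plane waves with nontrivial $\theta$-frequency, so the perturbation property in the underlying Baire scheme can be executed using oscillations that genuinely depend on the angular variable; generic points of continuity then correspond to solutions whose $\theta$-dependence does not vanish. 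The main obstacle I anticipate is the pointwise-in-$t$ clause ``for every $t\in(0,T)$'', which is sharper than the almost-everywhere conclusion Baire arguments routinely produce. I would address it by refining the perturbation property so that at each iteration step we simultaneously gain squared $L^2$-energy \emph{and} a strictly positive $\theta$-dependent defect on every slice $\{t=t_k\}$ in a prescribed countable dense set $\{t_k\}\subset(0,T)$, then extract by a diagonal argument a single weak solution whose angular non-triviality persists on a dense set of times and, by weak continuity of $v$, at every $t\in(0,T)$.
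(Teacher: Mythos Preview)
Your subsolution construction---the axisymmetric swirl-free ansatz $\overline v=\alpha(r,t)\,e_z$, the reduction of~\eqref{Sublsg in cylindrical coordinates} to a Burgers equation via a quadratic choice of $\overline U_{zr}$, and the rarefaction fan as mixing profile---is essentially the paper's. The divergence is in how you break the symmetry, and there your argument has a genuine gap. You want the $\theta$-dependence to emerge from the convex-integration oscillations themselves, and you propose to upgrade a dense-in-$t$ conclusion to ``for every $t\in(0,T)$'' by weak continuity. That last step fails: weak-$L^2$ continuity of $t\mapsto v(\cdot,t)$ does not propagate non-axisymmetry, since a weak limit of non-axisymmetric fields can perfectly well be axisymmetric (angular averaging is a weakly continuous projection). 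The ``refined perturbation property'' you allude to is also left entirely unwritten and would itself be a nontrivial piece of work.

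The paper sidesteps all of this with one line: it inserts the $\theta$-dependence directly into the \emph{energy profile}, taking
\[
\overline e(r,\theta,t)=\frac{1}{2r^2}\Bigl(1-\tfrac{\varepsilon}{2}\bigl(1+\sin^2\theta\bigr)(1-r\lambda)\bigl(1-f(r,t)^2\bigr)\Bigr).
\]
For $\varepsilon>0$ small this still satisfies $e(\overline v,\overline U)<\overline e$ in the fan and $e(\overline v,\overline U)=\overline e$ outside, so Theorem~\ref{Subsolution criterion inscompressible Euler equations} applies unchanged and produces solutions with $\tfrac12|v|^2=\overline e$ almost everywhere. Since $\overline e$ depends nontrivially on $\theta$ for every $t>0$ (because $|f|<1$ in the fan), the kinetic energy density---and hence $v$---is non-axisymmetric at every positive time, with no Baire refinement or time-slice argument needed. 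This is the key trick you are missing: the subsolution stays axisymmetric, but the prescribed energy does not.
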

\begin{proof}
To find a suitable subsolution of~\eqref{Cauchy problem for the axisymmetric Euler equations}, we take the ansatz
\begin{align*}
\overline{v}(x,t)=\overline{v}(r,\theta,z,t)=\begin{pmatrix}
0\\0\\\alpha(r,t)
\end{pmatrix}, \quad \overline{U}(x,t)=\overline{U}(r,\theta,z,t)=\begin{pmatrix}
\beta(r,t)& 0 &\gamma(r,t)\\
0 & 0 &0\\
\gamma(r,t)& 0 &-\beta(r,t)
\end{pmatrix}
\end{align*}
for functions $\alpha,\beta,\gamma$, $\alpha(\cdot,0)=\alpha_0(\cdot)$ to be determined. Evidently such $v$ is axisymmetric and swirl-free and is divergence-free, i.e., it fulfils the third line of~\eqref{Sublsg in cylindrical coordinates}. Now, for this ansatz the first line of~\eqref{Sublsg in cylindrical coordinates} results in 
\begin{align*}
\partial_r \beta + \frac{\beta}{r}+ \partial_r q =0
\end{align*}
since $v_r=0$ and $\partial_z \gamma=0$. The latter is fulfilled if $q$ is chosen as
\begin{align*}
q(r)=\frac{1}{2}\alpha^2+\frac12\int_{1}^r \frac{\alpha(s)^2}{s}ds
\end{align*}
and $\beta=-\frac{1}{2}\alpha^2$. In order to deal with the second line of~\eqref{Sublsg in cylindrical coordinates}, we set $\alpha(r,t)=\frac{f(r,t)}{r}$ for $f=f(r,t)$. Moreover, note that $\partial_r(r\gamma)=\gamma + r\partial_r \gamma$. Then the second line of~\eqref{Sublsg in cylindrical coordinates} can be seen to be equivalent to
\begin{align*}
\partial_t \alpha(r,t)+ \partial_r \gamma(r,t) + \frac{\gamma(r,t)}{r}=0,
\end{align*}
and the latter is equivalent to
\begin{align*}
\partial_t {f(r,t)}+ \partial_r (r\gamma(r,t))=0.
\end{align*}
Now, for $\lambda>0$ small we set 
\begin{align}\label{Definition of gamma}
\gamma(r,t)=-\frac{\lambda}{2r}(1-f(r,t)^2)=-\frac{\lambda}{2}\left(\frac{1}{r}-r\alpha(r,t)^2\right).
\end{align}
This choice of $\gamma$ in (\ref{Definition of gamma}) implies $\partial_r (r\gamma)=\frac{\lambda}{2}\partial_r f^2$.
Hence, as in~\cite{SZEKELYHIDI20111063,BardosSzWiedemann}, we end up with a Burgers equation
\begin{align}\label{Burgers equation for f}
\partial_t f + \frac{\lambda}{2}\partial_r f^2=0
\end{align}
with initial data
\begin{align}\label{initial condition for Burgers equation for f}
f(r,0)=r{\alpha(r,0)}=\begin{cases} -1 &\text{ if } r\in (\delta,r_0)\\
1 &\text{ if } r\in (r_0,R).
\end{cases}
\end{align}
The Burgers equation~\eqref{Burgers equation for f}, \eqref{initial condition for Burgers equation for f} admits a rarefaction solution which takes the form
\begin{align}\label{Definition of f}
f(r,t)=\begin{cases}
-1 &\text{ if } r\in (\delta,r_0-\lambda t)\\
\frac{r-r_0}{\lambda t} &\text{ if } r\in (r_0-\lambda t, r_0+\lambda t)\\
1 &\text{ if } r\in (r_0+\lambda t, R).
\end{cases}
\end{align}
If we set $\alpha(r,t)=\frac{f(r,t)}{r}$ for $f$ defined by~\eqref{Definition of f} and $\gamma$ as in~\eqref{Definition of gamma}, the second line of~\eqref{Sublsg in cylindrical coordinates} is solved up to some positive time $T$. We now turn to the energy inequality. Let us compute $\overline{v}\otimes \overline{v} - \overline{U}$:
\begin{align*}
\overline{v}\otimes \overline{v} - \overline{U}=\begin{pmatrix}
-\beta &0 &-\gamma\\
0 &0 &0\\
-\gamma & 0 &\alpha^2 + \beta
\end{pmatrix}.
\end{align*} 
Then 
\begin{align*}
\det(\overline{v}\otimes \overline{v}-\overline{U}-\mu I_3)=
&=-\mu\left[\left(\frac{1}{2}\alpha^2-\mu\right)^2-\gamma^2\right].
\end{align*}
%
This implies 

\begin{align*}
e(\overline{v},\overline{U})&=\frac{1}{2}\alpha^2+\frac{\lambda}{2}\left(\frac{1}{r}-r\alpha^2\right)\\
&=\frac{1}{2r^2}\left(1-(1-r\lambda)(1-f(r,t)^2)\right).
\end{align*}
Now, we set
\begin{align*}
\overline{e}(r,\theta,t)= \frac{1}{2r^2}\left(1-\frac{\varepsilon}{2}(1+\sin^2(\theta))(1-r\lambda)(1-f(r,t)^2)\right)
\end{align*}
for small $\varepsilon>0$. Moreover let
$$U\coloneqq \{(r,\theta,z,t)\in \Omega\times (0,T):~r_0-\lambda t < r <r_0+\lambda t\}.$$
Then, as $|f|=1$ outside $U$, we have
\begin{align*}
e(\overline{v},\overline{U})=\frac{1}{2r^2}=\overline{e}=\frac{1}{2}|v_0|^2 \text{ in } (\Omega\times (0,T))\setminus U.
\end{align*}
In addition, for $\lambda$ and  $\varepsilon$ sufficiently small, we have
\begin{align*}
e(\overline{v},\overline{U})&=\frac{1}{2r^2}\left(1-(1-r\lambda)(1-f(r,t)^2)\right)\\
&\leq  \frac{1}{2r^2}\left(1-\frac{\varepsilon}{2}(1+\sin^2(\theta))(1-r\lambda)(1-f(r,t)^2)\right)\\
&\leq\frac{1}{2}|v_0|^2
\end{align*}
in $U$ because $|f|\leq 1$. More precisely, since $|f|<1$ we have 
\begin{align*}
e(\overline{v},\overline{U})&< \overline{e} \text{ in } U\\
e(\overline{v},\overline{U})&=\overline{e}=\frac{1}{2}|v_0|^2 \text{ in } (\Omega\times (0,T))\setminus U.
\end{align*}
Then by Theorem \ref{Subsolution criterion inscompressible Euler equations} there exist infinitely many weak solutions $v\in L^{\infty}((0,T),H(\Omega;\mathbb{R}^3))$ with $\frac{1}{2}|v|^2=\overline{e}$ a.e. in $\Omega\times (0,T)$ and $v(\cdot,0)=v_0$. This implies admissibility, as for $t>0$ we have
\begin{align*}
\frac{1}{2}\int_{\Omega} |v(r,\theta,z,t)|^2 dz d\theta dr =\int_{\Omega} \overline{e}(r,\theta,z,t) dz d\theta dr <\frac{1}{2}\int_{\Omega} |v_0|^2 dz d\theta dr.
\end{align*}
Finally, since the energy $\bar e$ depends on the angular coordinate $\theta$ for all positive times, we see that the axisymmetry is instantaneously lost.
\end{proof}
%
%
%
%
\begin{rk}
In the case of axisymmetric data with swirl, one can easily adapt the construction in~\cite{BardosSzWiedemann} to find an example of instantaneous symmetry breaking: Indeed, by simply adding a zero vertical component, we obtain such an example for a pure-swirl initial velocity. 
\end{rk}

\section*{Appendix}

\subsection*{Equivalence of Notions of Weak Solution}
We give a detailed and explicit computation to justify~\eqref{polartransf}, which in turn serves to verify Remark~\ref{Remark weak solutions 3D and axisymmetric}.

For the nonlinear term we have
\begin{align*}
v\otimes v :\nabla_x \varphi= ~&v_x^2\partial_x \varphi_x + v_xv_y\partial_y \varphi_x + v_x v_z \partial_z \varphi_x + v_y v_x \partial_x \varphi_y + v_y^2 \partial_y \varphi_y + v_y v_z \partial_z \varphi_y\\
&+ v_zv_x \partial_x \varphi_z + v_z v_y \partial_y \varphi_z + v_z^2 \partial_z \varphi_z.
\end{align*}
Now, we get
\allowdisplaybreaks
\begin{align*}
v_x^2\partial_x\varphi_x=v_r^2\cos^2(\theta)
&\Big(\cos^2(\theta)\partial_r \varphi_r-\cos(\theta)\sin(\theta)\partial_r \varphi_{\theta}-\frac{\sin(\theta)\cos(\theta)}{r}\partial_{\theta}\varphi_r\\
&+\frac{\sin^2(\theta)}{r}\varphi_r+\frac{\sin^2(\theta)}{r}\partial_{\theta}\varphi_{\theta}+\frac{\sin(\theta)\cos(\theta)}{r}\varphi_{\theta}\Big),\\
v_xv_y\partial_y\varphi_x=v_r^2\cos(\theta)
\sin(\theta)
&\Big(\cos(\theta)\sin(\theta)\partial_r \varphi_r-\sin^2(\theta)\partial_r \varphi_{\theta}+\frac{\cos^2(\theta)}{r}\partial_{\theta}\varphi_r\\
&-\frac{\sin(\theta)\cos(\theta)}{r}\varphi_r-\frac{\cos(\theta)\sin(\theta)}{r}\partial_{\theta}\varphi_{\theta}-\frac{\cos^2(\theta)}{r}\varphi_{\theta}\Big),\\
v_xv_z\partial_z \varphi_x=v_r\cos(\theta)v_z&\Big(\partial_z\varphi_r\cos(\theta)-\partial_z\varphi_{\theta}\sin(\theta)\Big),\\
v_yv_x\partial_x\varphi_y=v_r^2\cos(\theta)
\sin(\theta)
&\Big(\cos(\theta)\sin(\theta)\partial_r \varphi_r+\cos^2(\theta)\partial_r \varphi_{\theta}-\frac{\sin^2(\theta)}{r}\partial_{\theta}\varphi_r\\
&-\frac{\sin(\theta)\cos(\theta)}{r}\varphi_r-\frac{\cos(\theta)\sin(\theta)}{r}\partial_{\theta}\varphi_{\theta}+\frac{\sin^2(\theta)}{r}\varphi_{\theta}\Big),\\
v_y^2\partial_x\varphi_x=v_r^2\sin^2(\theta)
&\Big(\sin^2(\theta)\partial_r \varphi_r+\cos(\theta)\sin(\theta)\partial_r \varphi_{\theta}+\frac{\sin(\theta)\cos(\theta)}{r}\partial_{\theta}\varphi_r\\
&+\frac{\cos^2(\theta)}{r}\varphi_r+\frac{\cos^2(\theta)}{r}\partial_{\theta}\varphi_{\theta}-\frac{\sin(\theta)\cos(\theta)}{r}\varphi_{\theta}\Big),
\end{align*}
\begin{align*}
v_yv_z\partial_z \varphi_y=v_r\cos(\theta)v_z&\Big(\partial_z
\varphi_r\sin(\theta)+\partial_z\varphi_{\theta}\cos(\theta)\Big),\\
v_zv_x\partial_x\varphi_z=v_r\cos(\theta)v_z
&(\cos(\theta)\partial_r-\sin(\theta)\partial_{\theta})\varphi_z,\\
v_zv_y\partial_y\varphi_z=v_r\sin(\theta)v_z
&(\sin(\theta)\partial_r+\cos(\theta)\partial_{\theta})\varphi_z.
\end{align*}
Hence we get
\begin{align*}
v\otimes v:\nabla_x\varphi&=v_r^2\Big(\cos^4(\theta)+2\cos^2(\theta)\sin^2(\theta)+\sin^4(\theta)\Big)\partial_r\varphi_r+ v_rv_z(\partial_z \varphi_r+ \partial_r \varphi_z) + v_z^2\partial_z\varphi_z\\
&=v_r^2\partial_r\varphi_r + v_rv_z\partial_z\varphi_r+v_rv_z \partial_r \varphi_z+v_z^2\partial_z\varphi_z
\end{align*}
as desired.

\subsection*{Convex Integration Lemmata}

\begin{proof}[Proof of Lemma~\ref{weak solution condition}]
Let $m\in X$ be such that
\begin{align*}
|m(r,z,t)|^2=\rho_0(r,z)\chi(t) \text{ for a.e. } (r,z,t)\in \Omega_{\mathbb{R}}\times (0,T).
\end{align*}
By density of $X_0$ there is a sequence $(m_k)_{k\in \mathbb{N}}$ so that $m_k\to m$ in $(X,d)$. For any $m_k$, let $U_k$ be the associated smooth matrix. Since
\begin{align}\label{lokale Energiegleichung fuer die Folge}
e(\rho_0(r,z),m_k(r,z,t),U_k(r,z,t))<\frac{\chi(t)}{n},
\end{align}
Lemma \ref{properties of the energy} implies
\begin{align*}
|U_k(r,z,t)|\leq e(\rho_0(r,z),m_k(r,z,t),U_k(r,z,t))<\frac{\chi(t)}{2}
\end{align*}
for all $(r,z,t)\in \Omega_{\mathbb{R}}\times (0,T)$ and hence $\|U_k\|\leq \frac{\chi(t)}{2}.$
As a consequence there exists $U\in L^{\infty}(\Omega\times (0,T))$ so that along a subsequence we have
\begin{align*}
U_k\mathrel{\ensurestackMath{\stackon[1pt]{\rightharpoonup}{\scriptstyle\ast}}} U \text{ in } L^{\infty}(\Omega\times (0,T)).
\end{align*}
Due to Lemma \ref{properties of the energy} we know that $\overline{\text{hint}K_{\rho_0,\chi}^{co}}$ is convex and compact and hence $m\in X$ with associated matrix field $U$ solves (\ref{Sublsg system}).
Now, it follows by (\ref{lokale Energiegleichung fuer die Folge}) that $(m,U,q_0)$ takes values in $K_{\rho,\chi}^{co}$ almost everywhere whence by Lemma \ref{properties of the energy}(v) we have $(m,U,q_0)\in K_{\rho_0,\chi}$ a.e. in $\Omega_{\mathbb{R}}\times (0,T)$. Then Lemma \ref{subsolutions to weak solutions} implies that $(\rho_0,m)$ is a weak solution of (\ref{compressible Euler equations in terms of the momentum on the strip}) in $\Omega\times (0,T)$.
\end{proof}
\begin{proof}[Proof of Lemma~\ref{bairelem}]
For $\varepsilon >0$ let $\eta_{\varepsilon}(r,z,t)=\frac{1}{\varepsilon^3}\eta\left(\frac{r}{\varepsilon},\frac{z}{\varepsilon},\frac{t}{\varepsilon}\right)$ where $\eta$ is the standard mollifier.
Let $m\in X$ and $(m_k)_{k\in \mathbb{N}}\subset X$ such that $m_k\to m$ in $(X,d)$. Note that for any $\varepsilon >0$ Young's inequality implies
\begin{align}\label{erste Ungleichung}
m_k\ast \eta_{\varepsilon} \to m\ast \eta_{\varepsilon} \text{ in } L^2([0,T],H(\Omega;\mathbb{R}^2)) \text{ as } k\to \infty.
\end{align}
Moreover, by the properties of the convolution we have
\begin{align}\label{zweite Ungleichung}
m\ast \eta_{\varepsilon} \to m \text{ in } L^2([0,T],H(\Omega;\mathbb{R}^2)) \text{ as } \varepsilon\to 0.
\end{align}
Now, define $I_{\varepsilon}\colon (X,d)\to L^2([0,T],H(\Omega;\mathbb{R}^2))$, $m\mapsto m\ast \eta_{\varepsilon}$. Then (\ref{erste Ungleichung}) shows that $I_{\varepsilon}$ is continuous and (\ref{zweite Ungleichung}) establishes pointwise convergence of $I_{\varepsilon}$ to $I$. Hence $I$ is a Baire-1 map. Therefore the set of points of continuity of $I$ is residual in $(X,d)$.
\end{proof}

\begin{proof}[Proof of Lemma~\ref{Subsolution lemma}]
Let $\tilde{m}$ and $\tilde{U}$ be as in Lemma \ref{Konstruktion Sulsg} and consider $\tilde{X}_0$ to be the set of continuous momentum fields $m$ with associated $U\colon \Omega_{\mathbb{R}}\times [0,T)\to S_0^2$ so that 
\begin{align*}
\begin{cases}
\partial_t m + \operatorname{div}(U)+ \nabla q_0=0 \text{ in } \Omega\times (0,T),\\
\operatorname{div}(m)=0,\\
\end{cases}
\end{align*}
\begin{align}
&e(\rho_0(\cdot),m(\cdot,t),U(\cdot,t))<\frac{\chi(t)}{2} \text{ for all } t\in (0,T],\label{energy inequality in the construction}\\
&\text{supp}(m-\tilde{m})\subset \Omega\times \left[0,\frac{T}{2}\right),\\
&m_r=0 \text{ on } \{r=\delta\}\cup\{r=R\},\\
&U=\tilde{U} \text{ in } \Omega \times \left[\frac{T}{2},T\right).
\end{align}
Note that $\tilde{X}_0$ is contained in a bounded set $B\subset H(\Omega)$, by virtue of~\eqref{energy inequality in the construction}. Denote by $\tilde{X}$ the closure of $\tilde{X}_0$ with respect to a metrization $d$ of the convergence in $C([0,T],H_w(\Omega;\mathbb{R}^2))$.

The key tool for this construction is the following claim, which can be obtained by minor modifications of the perturbation property, Lemma~\ref{perturbation property}:

\textbf{Claim}: Let $\varnothing \neq \Omega_0\Subset \Omega$ be given. For any $\alpha >0$ there exists $\beta >0$ so that for all $(m,U)$ with $m\in \tilde{X}_0$ and
\begin{align*}
\int_{\Omega_0} |m(r,z,0)|^2-\rho_0(r,z) \chi(0) dz dr <-\alpha,
\end{align*}
there exists a sequence $((m_k,U_k))_{k\in \mathbb{N}}$ fulfilling
\begin{enumerate}[(i)]
\item $m_k\in \tilde{X}_0$ for all $k\in \mathbb{N}$,
\item $\text{supp}(m_k-m)\subset \Omega_0\times [0,\delta_k]$ for a sequence $(\delta_k)_{k\in \mathbb{N}}$, $\delta_k\to 0$ to be fixed,
\item $m_k\to m$ in $(\tilde{X},d)$,
\item $\liminf_{k\to \infty}\int_{\Omega_0} |m_k(r,z,0)|^2 dz dr\geq \int_{\Omega_0} |m(r,z,0)|^2 dz dr + \beta \alpha^2$.
\end{enumerate}
Now, let $\Omega_1\Subset \Omega$ and set $m_1(r,z,t)=\tilde{m}(r,z,t)$ and $U_1(r,z,t)=\tilde{U}(r,z,t)$ for all $(r,z,t)\in \Omega\times [0,T)$. Then by~\eqref{energy inequality in the construction} we have
\begin{equation}
\begin{aligned}\label{perturbation property für m1}
\int_{\Omega_1} |m_1(r,z,0)|^2 dz dr &\leq \int_{\Omega_1} 2\rho_0(r,z)e(\rho_0(r,z),m_1(r,z,0),U_1(r,z,0))dz dr\\
&< \int_{\Omega_1} \rho_0(r,z) \chi(0) dz dr,
\end{aligned}
\end{equation}
hence 
\begin{align*}
\alpha_1\coloneqq -\int_{\Omega_1}\left(|m_1(r,z,0)|^2-\rho_0(r,z)\chi(0)dz dr\right)>0.
\end{align*}
Applying the claim to $\Omega_1$ and $\delta=\frac{T}{2}$, we get $m_2\in \tilde{X}_0$ so that
\begin{align*}
\text{supp}(m_2-m_1,U_2-U_1)\subset \Omega_1\times \left[0,\frac{T}{2}\right],\\
\int_{\Omega_1} |m_2(r,z,0)|^2dz dr \geq \int_{\Omega_1}|m_1(r,z,0)|^2 +  \beta\alpha_1^2.
\end{align*}
Now, for $k\geq 2$ consider $\Omega_k\supset \Omega_1$ with $\cup_{k=1}^{\infty} \Omega_k =\Omega$, $\Omega_k\subset \Omega_{k+1}\Subset \Omega$ and $|\Omega_{k+1}\setminus \Omega_k|\leq 2^{-k}$ for all $k\in \mathbb{N}$. 
Assume we have constructed $(m_3,U_3),\dots, (m_n,U_n)$ with $m_k\in \tilde{X}_0$ for $k=3,\dots,n$ by using the above claim. Again we observe by (\ref{energy inequality in the construction})
\begin{align*}
\alpha_k=-\int_{\Omega_k} |m_k(r,z,0)|^2-\rho_0(r,z)\chi(0) dz dr>0. 
\end{align*}
The claim applied to $\Omega_k$, $\delta_k=\frac{T}{2^k}$ and $\alpha_k$ yields $m_{k+1}\in \tilde{X}_0$ and $U_{k+1}$ with
\begin{align}
\text{supp}(m_{k+1}-m_k,U_{k+1}-U_k)\subset \Omega_{k}\times [0,2^{-k}T], \label{Support Bedingung}\\
d(m_{k+1},m_k)<2^{-k} \label{Konvergenz in der (X,d)},\\
\int_{\Omega_k} |m_{k+1}(r,z,0)|^2 dz dr \geq \int_{\Omega_k} |m_k(r,z,0)|^2 dz dr + \beta \alpha_k^2\label{Perturbation der Folge}.
\end{align}
Recalling that $(\tilde{X},d)$ is a complete metric space, it follows from~\eqref{Konvergenz in der (X,d)} that there exists $m_0\in C([0,T],H_w(\Omega;\mathbb{R}^2))$ so that $m_k\to m_0$ in $(\tilde{X},d)$. Taking (\ref{Support Bedingung}) into account, on any compact subset the sequence is eventually constant. More precisely:
\begin{align*}
\forall C\subset \Omega\times (0,T) \text{ compact}~\exists~k_0\in \mathbb{N}:~\forall k \geq k_0:~(m_k,U_k)=(m_{k_0},U_{k_0}) \text{ on } C.
\end{align*}
As a direct consequence we observe
\begin{align*}
(m_k,U_k)\to (m_0,U_0) \text{ in } C_{loc}(\Omega\times (0,T)),
\end{align*}
where $(m_0,U_0)$ satisfies $e(\rho_0(r,z),m_0(r,z,t),U_0(r,z,t))<\frac{\chi(t)}{2}$ in $\Omega_{\mathbb{R}}\times (0,T)$ and solves
\begin{align*}
\partial_t m_0+\operatorname{div} U_0 +\nabla q_0= 0,\\
\operatorname{div} (m_0) =0
\end{align*}
in $\Omega_{\mathbb{R}}\times (0,T)$. Now, it only remains to show that $|m_0(r,z,0)|^2=\rho_0(r,z)\chi(0)$. Note that (\ref{Perturbation der Folge}) is equivalent to
\begin{align*}
\alpha_{k+1} + \beta \alpha_k^2 \leq \alpha_k+ \int_{\Omega_{k+1}\setminus \Omega_k} \rho_0(r,z) \chi(0) dz dr, 
\end{align*}
which implies $|\alpha_k-\alpha_{k+1}|\leq C2^{-k}$ and hence $\alpha_k\to 0$.
Furthermore we have
\begin{align*}
0&>\int_{\Omega} |m_k(r,z,0)|^2-\rho_0(r,z) \chi(0) dz dr
\\&\geq \int_{\Omega_k} |m_k(r,z,0)|^2-\rho_0(r,z) \chi(0) dz dr + \int_{\Omega\setminus \Omega_k} |m_k(r,z,0)|^2-\rho_0(r,z)\chi(0) dz dr\\
&\geq -\alpha_k-C|\Omega\setminus \Omega_{k}|\\
&\geq -(\alpha_k + C 2^{-k-1})
\end{align*}
since $|\Omega\setminus \Omega_k|=\sum_{j=k}^{\infty} 2^{-j}=2^{-k-1}$. We conclude
\begin{align}\label{Konvergenz L^2 Konv der mk}
\lim_{k\to \infty}\int_{\Omega} |m_k(r,z,0)|^2 -\rho_0(r,z)\chi(0)dz dr = 0.
\end{align}
We denote by $\eta_{\varepsilon}$ a standard mollifier. Combining~\eqref{Konvergenz in der (X,d)} with the fact that $m_k\in C([0,T],H_w(\Omega;\mathbb{R}^2))$ for any $k\in \mathbb{N}$, we may choose $\gamma_k<2^{-k}$ so that 
\begin{align}
\|\left((m_k-m_{k+1})\ast \eta_{\gamma_l}\right)(\cdot,0)\|_{L^2(\Omega)}&<2^{-k} \text{ for all } l \leq k\label{Faltung von mk - mk+1},\\
\sup_{t\in [0,T]} \| m_k-m_k\ast \eta_{\gamma_k}\|_{L^2(\Omega)}&<2^{-k}\label{mk - Faltung mk}.
\end{align}
As a consequence of (\ref{Faltung von mk - mk+1}) we deduce
\begin{align}\label{erste Faltungsabschaetzung}
\|((m_k-m_0)\ast \eta_{\gamma_k})(\cdot,0)\|_{L^2(\Omega)}&\leq \sum_{j=0}^{\infty} \|((m_{k+j}-m_{k+j+1})\ast \eta_{\gamma_k})(\cdot,0)\|_{L^2(\Omega)}\\
&\leq \sum_{j=0}^{\infty} 2^{-(k+j)}=2^{-k+1}\notag.
\end{align}
Thanks to the estimates (\ref{Faltung von mk - mk+1}), (\ref{mk - Faltung mk}) we end up with
\begin{align*}
\|(m_k-m_0)(\cdot,0)\|_{L^2(\Omega)}&\leq \|(m_k-m_k\ast \eta_{\gamma_k})(\cdot,0)\|_{L^2(\Omega)}\\
&+ \|((m_k-m_0)\ast \eta_{\gamma_k})(\cdot,0)\|_{L^2(\Omega)}\\
&+ \|(m_0\ast \eta_{\gamma_k}-m_0)(\cdot,0)\|_{L^2(\Omega)}\\
&\leq 2^{-k}+ 2^{-k+1}+ \|(m_0\ast \eta_{\gamma_k}-m_0)(\cdot,0)\|_{L^2(\Omega)}.
\end{align*}
Since $m_0\in C([0,T],H_w(\Omega;\mathbb{R}^2))$ this now implies $m_k(\cdot,0)\to m_0(\cdot,0)$ in $L^2(\Omega;\mathbb{R}^2)$ and because of (\ref{Konvergenz L^2 Konv der mk}) we have $|m_0(r,z,0)|^2=\rho_0(r,z)\chi(0)$ for almost all $(r,z)\in \Omega$.
\end{proof}

\printbibliography
\end{document}